\def\clap#1{\hbox to 0pt{\hss#1\hss}}
\DeclareFontFamily{OMS}{rsfs}{\skewchar\font'60}
\DeclareFontShape{OMS}{rsfs}{m}{n}{<-5>rsfs5 <5-7>rsfs7 <7->rsfs10 }{}
\DeclareSymbolFont{rsfs}{OMS}{rsfs}{m}{n}
\DeclareSymbolFontAlphabet{\scr}{rsfs}
\newcommand{\sB}{\scr{B}}
\newcommand{\sF}{\scr{F}}
\newcommand{\sG}{\scr{G}}
\newcommand{\sL}{\scr{L}}
\newcommand{\sO}{\scr{O}}
\newcommand{\bN}{\mathbb{N}}
\newcommand{\bQ}{\mathbb{Q}}
\newcommand{\bR}{\mathbb{R}}
\DeclareMathOperator{\Exc}{Exc}
\DeclareMathOperator{\codim}{codim}
\DeclareMathOperator{\Hom}{Hom}
\DeclareMathOperator{\Sym}{Sym}
\DeclareMathOperator{\supp}{supp}
\DeclareMathOperator{\Var}{Var}
\DeclareMathOperator{\vol}{vol}
\newcommand{\wtilde}{\widetilde}
\newcounter{thisthm}
\newcommand{\iref}[1]{(\thesection.\the\value{thisthm}.\the\value{#1})}
\theoremstyle{plain}    
\newtheorem{thm}{Theorem}[section]
\numberwithin{equation}{thm}
\numberwithin{figure}{section}
\theoremstyle{plain}    
\newtheorem{cor}[thm]{Corollary}
\newtheorem{conjecture}[thm]{Conjecture}
\theoremstyle{plain}    
\newtheorem{prop}[thm]{Proposition}
\newtheorem{proclaim-special}[thm]{\specialthmname}
\theoremstyle{remark}
\newtheorem{defn}[thm]{Definition}
\newtheorem{rem}[thm]{Remark}
\newtheorem{explanation}[thm]{Explanation}
\newtheorem{subclaim}[equation]{Claim} 
\newtheorem*{claim*}{Claim} 
\newtheorem{notation}[thm]{Notation}
\newtheoremstyle{bozont-remark}{3pt}{3pt}%
     {}
     {}
     {\it}
     {.}
     {.5em}
     {\thmname{#1}\thmnumber{ #2}: \thmnote{\sc #3}}
\theoremstyle{bozont-remark}
\def\factor#1.#2.{\left. \raise 2pt\hbox{$#1$} \right/\hskip -2pt\raise
  -2pt\hbox{$#2$}}
\newlength{\swidth}
\newenvironment{enumerate-p}{
  \begin{enumerate}}
  {\setcounter{equation}{\value{enumi}}\end{enumerate}}
\definecolor{tomato}{RGB}{180,62,39}
\definecolor{forrest}{RGB}{81,133,49}
\definecolor{lighttomato}{RGB}{253,65,65}
\definecolor{lightforrest}{RGB}{145,237,87}
\definecolor{mygreen}{RGB}{40,104,69}
\definecolor{mygreen2}{RGB}{3,149,39}
\definecolor{darkolivegreen}{RGB}{102,118,75}
\definecolor{cranegreen}{RGB}{102,118,75}
\definecolor{mydarkblue}{RGB}{10,92,153}
\definecolor{myblue}{RGB}{57,222,186}
\definecolor{pinkish}{RGB}{213,83,222}
\definecolor{colD}{RGB}{213,83,222}
\definecolor{defb}{RGB}{213,83,222}
\definecolor{goldenrod}{RGB}{225,115,69}
\definecolor{mauve}{RGB}{224, 176, 255}
\definecolor{fuchsia}{RGB}{255, 0, 255}
\definecolor{lavender}{RGB}{230, 230, 250}
\definecolor{gold}{RGB}{255, 215, 0}
\definecolor{orange}{RGB}{255, 127, 0}
\definecolor{maroon}{RGB}{123, 17, 19}
\definecolor{brightmaroon}{RGB}{195, 33, 72}
\definecolor{richmaroon}{RGB}{176, 48, 96}
\definecolor{green}{RGB}{3,149,39}
\author{Behrouz Taji}
\title[Isotriviality of families of canonically-polarized manifolds]{The isotriviality of smooth families of canonically-polarized manifolds over a special quasi-projective base}
\address{The Department of Mathematics and Statistics, McGill University. Burnside Hall, Room 1031. 805 Sherbrooke W. Montreal, QC, H3A 0B9, Canada}
\email{\href{mailto:behrouz.taji@matil.mcgill.ca}{behrouz.taji@mail.mcgill.ca}}
\date{\today}
\keywords{Families, Isotriviality, Moduli, Minimal Model Program, Special Varieties, Kodaira Dimension}
\subjclass[2010]{14D22, 14D23, 14E30, 14J10, 14K10, 14K12}
\begin{document}

\begin{abstract} In this paper we prove that a smooth family of canonically polarized manifolds parametrized by a special (in the sense of Campana) quasi-projective variety is isotrivial.
\end{abstract}

\maketitle

\section{introduction}

In 1962 Shafarevich conjectured that any smooth family of curves of genus $g\geq 2$ over non-hyperbolic algebraic curves, namely $\mathbb{C}$, $\mathbb{C}^*$, $\mathbb{P}^1$ and elliptic curve $E$, is isotrivial. More generally, it was conjectured that any smooth family of \emph{canonically-polarized manifolds} over these curves has no (algebraic) variation. This gives rise to a natural question: Is there a large class of higher dimensional bases over which every such family is isotrivial? Campana has introduced \emph{special} varieties as higher dimensional analogues of non-quasi-hyperbolic curves (the curves listed above) and conjectured that they serve as natural candidates for such bases.

\begin{conjecture}[The Isotriviality Conjecture of Campana] \label{iso} Let $Y^{\circ}$ be a smooth quasi-projective variety parametrizing a smooth family of canonically polarized manifolds. If $Y^{\circ}$ is special (see the definition below), then the family is isotrivial.

\end{conjecture}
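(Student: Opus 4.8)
\emph{The plan.} The approach is to argue by contradiction: assuming the family is \emph{not} isotrivial, one extracts from it enough positivity on the cotangent sheaf of a compactification of $Y^{\circ}$ to produce a Bogomolov sheaf, which a special variety cannot carry. Write $f^{\circ}\colon X^{\circ}\to Y^{\circ}$ for the family and let $v:=\Var(f^{\circ})$ be the dimension of the image of the induced moduli map from $Y^{\circ}$ to the coarse moduli space $\mathfrak{M}_{h}$ of canonically polarized manifolds with fixed Hilbert polynomial $h$. Since $\mathfrak{M}_{h}$ is separated and its points are isomorphism classes, $v=0$ forces all fibres of $f^{\circ}$ to be isomorphic, i.e.\ $f^{\circ}$ is isotrivial; so it suffices to show that $v\geq 1$ is impossible. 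Assume $v\geq1$ from now on.

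\emph{Step 1: an orbifold Viehweg--Zuo sheaf on a compactification.} Fix a smooth projective compactification $\overline{Y}\supseteq Y^{\circ}$ with $D:=\overline{Y}\setminus Y^{\circ}$ a simple normal crossing divisor, and let $\Delta$ be the orbifold boundary supported on $D$ recording, along each component of $D$, the multiplicity data of a fixed proper model $\overline{f}\colon\overline{X}\to\overline{Y}$ extending $f^{\circ}$; smoothness of $f^{\circ}$ over $Y^{\circ}$ gives $0\leq\Delta\leq D$. Being special is a property of $Y^{\circ}$, equivalently of the orbifold $(\overline{Y},D)$, and since $\Delta\leq D$ the orbifold $(\overline{Y},\Delta)$ is special as well. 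The heart of Step~1 is to produce an integer $m\geq1$ and an invertible sheaf $\sA$ on $\overline{Y}$ with an injection
\[
\sA\ \hookrightarrow\ \Sym^{m}\,\Omega^{1}_{\overline{Y}}(\log D)
\]
that is compatible with the orbifold structure, i.e.\ factors through the orbifold symmetric cotangent sheaf $\Sym^{m}\Omega^{1}_{(\overline{Y},\Delta)}$, and satisfies $\kappa(\sA)\geq v$. For $v=\dim Y^{\circ}$ this is essentially the classical Viehweg--Zuo sheaf; for arbitrary $v$ one first factors the moduli map through its image $Z$ (of dimension $v$) and invokes the Hodge-theoretic refinements of the construction, while keeping track of the multiple fibres of $\overline{f}$ over $D$ so as to land in the orbifold cotangent sheaf.

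\emph{Step 2: conclusion via the structure theory of special orbifolds.} By Campana--Păun's orbifold generic semipositivity together with Campana's characterization of special orbifolds, a special orbifold $(\overline{Y},\Delta)$ admits no Bogomolov sheaf; in particular no rank-one subsheaf of $\Sym^{m}\Omega^{1}_{(\overline{Y},\Delta)}$ can have positive Kodaira dimension. (Concretely: saturating $\sA$ on a suitable birational model and applying algebraic integrability for the foliation it generates, one would obtain a rank-one subsheaf of some $\Omega^{p}_{(\overline{Y},\Delta)}$ of maximal Kodaira dimension $p\geq1$, hence a fibration of $(\overline{Y},\Delta)$ onto a positive-dimensional orbifold of general type---exactly what speciality forbids.) This contradicts $\kappa(\sA)\geq v\geq1$ from Step~1, so $v=0$ and $f^{\circ}$ is isotrivial.

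\emph{The main difficulty.} I expect Step~1 to be where the real work lies. The classical Viehweg--Zuo construction is tailored to families of maximal variation and, being assembled from the full log-Higgs field of the period map, does not natively see the orbifold multiplicities of $\overline{f}$ along $D$; yet the positivity input of Step~2 is available only in the orbifold category $(\overline{Y},\Delta)$. Reconciling the two---simultaneously extending the construction to arbitrary variation with the sharp bound $\kappa(\sA)\geq\Var(f^{\circ})$ and checking that the resulting sheaf respects the orbifold structure---is the technical crux, whereas Step~2 is in effect an application of the existing structure theory of special varieties and orbifolds.
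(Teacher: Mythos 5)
Your overall plan (compactify, build a Viehweg--Zuo subsheaf, contradict speciality) is the right shape, but two genuine gaps make the argument fail as written.

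\emph{Gap 1: the orbifold structure is in the wrong place.} You put the orbifold boundary $\Delta$ on $\overline{Y}$, recording the multiple fibres of a compactified family $\overline{f}$ over $D$, and then ask the Viehweg--Zuo sheaf to factor through $\Sym^m\Omega^1_{(\overline Y,\Delta)}$. That is not the relevant orbifold structure. The paper (following Jabbusch--Kebekus) takes the Stein factorization $h\colon Y\to Z$ of the moduli map, passes to a \emph{neat model} $\wtilde h\colon Y_h\to Z_h$, and the orbifold boundary $\Delta_h$ lives on the \emph{base} $Z_h$, induced by $\wtilde h$ and $D_h$. The VZ sheaf $\sL\subseteq\Sym^N\Omega^1_{Y_h}\log D_h$ is an honest logarithmic object upstairs; the crucial input from~\cite[Thm.~1.4]{JK11a} is that $\sL$ ``comes from the moduli,'' i.e.\ sits in $\Sym^N\sB$ for $\sB$ the saturated image of $d\wtilde h$, and this is what lets one \emph{descend} to a rank-one $\sL_{Z_h}\subseteq\Sym^N_{\mathcal{C}}\Omega^1_{Z_h}\log\Delta_h$ with $\kappa_{\mathcal{C}}=\dim Z_h$. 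Your $\Delta$ (from the multiplicities of $\overline f$ over $D$) and the paper's $\Delta_h$ (from multiple fibres of the moduli fibration) are unrelated, and there is no reason your $\sA$ should land in the orbifold symmetric cotangent sheaf you want. This also resolves the case $v<\dim Y^\circ$ automatically, since after descending one always has \emph{maximal} $\mathcal{C}$-Kodaira dimension on $Z_h$; in your version it is unclear how to get any contradiction when $\kappa(\sA)\geq v$ is strictly less than $\dim\overline Y$.

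\emph{Gap 2: the step from $\Sym^m$ to a Bogomolov sheaf is the theorem, not a corollary.} You assert that a special orbifold ``admits no Bogomolov sheaf; in particular no rank-one subsheaf of $\Sym^m\Omega^1_{(\overline Y,\Delta)}$ can have positive Kodaira dimension.'' The first clause is essentially the definition of special; the ``in particular'' does not follow. A Bogomolov sheaf lives in $\Omega^p$, not in $\Sym^m\Omega^1$, and passing from a rank-one subsheaf of $\Sym^m_{\mathcal{C}}\Omega^1\log$ of maximal $\mathcal{C}$-Kodaira dimension to bigness of $K+\Delta$ is precisely the hard content of the paper's Section~5 (Prop.~\ref{cp} and Thm.~\ref{final}). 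That argument is \emph{not} a foliation/algebraic-integrability argument on the base: it runs the log-MMP via~\cite[Thm.~1.1]{BCHM10} to get a minimal model $(X',P')$, bounds $vol(K_{X'}+P')$ from below using the Campana--Paun degree inequality~\ref{ogsp} (via Cor.~\ref{invariance}) and Teissier's inequality, and then takes a limit. Your parenthetical sketch (saturate, generate a foliation, invoke algebraic integrability, get $\Omega^p$) is where Campana--Paun's generic semipositivity is \emph{proved}, not how one concludes from it; as a route to the contradiction it is a missing idea, not a filled-in step. Once $(Z_h,\Delta_h)$ is shown to be of log-general type, the actual contradiction in the paper comes from pulling line subbundles of $\Omega^p_{Z_h}\log\Delta_h$ back through $\wtilde h^*$ and $\mu$ to $\Omega^p_Y\log D$, violating speciality of $(Y,D)$ — again different from producing a Bogomolov sheaf directly on $\overline Y$.
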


\begin{defn}[Special Logarithmic Pairs]\label{special} Let $(Y,D)$ be a pair consisting of a smooth projective variety $Y$ and a simple normal-crossing reduced boundary divisor $D$. We call $(Y,D)$ special, if for every invertible subsheaf  $\sL \subseteq \Omega_Y^p \log(D)$ and $p>0$, we have $\kappa(\sL)<p$. Moreover we shall call a smooth quasi-projective variety $Y^{\circ}$ special, if (Y,D) is special as a logarithmic pair, where $Y$ is a smooth compactification with a simple normal-crossing (snc, for short) boundary divisor $D$.

\end{defn}

So, by definition $\mathbb{C}$, $\mathbb{C}^*$, $\mathbb{P}^1$ and $E$ is the list of all special quasi-projective curves. Other important examples of special varieties include rationally-connected varieties, varieties with zero Kodaira dimension~\cite[Thm.~5.1]{Ca04} and those with nef anti-canonical divisor~\cite[Thm.~11.1]{Lu02}. These examples however are very particular instances of special varieties and it is important to recall that in every
dimension $n$, there are special quasi-projective manifolds of all possible
Log-Kodaira dimensions $<n$.

Conjecture~\ref{iso} is generalization of the following celebrated conjecture of Viehweg.

\begin{conjecture}[Viehweg's Hyperbolicity Conjecture]\label{VC} Let $f^\circ: X^\circ \to Y^\circ$ be a smooth family of canonically-polarized varieties over a quasi-projective variety $Y^\circ$. Assume that Y is a smooth compactification of $Y^\circ$ with snc boundary divisor $D\cong Y\backslash Y^\circ$. If $\Var(f^\circ)$ is maximal, then $(Y,D)$ is of log-general type. (see ~\cite[Introduction]{Vie83} to for the definition of $\Var(f^\circ)$).
 
\end{conjecture}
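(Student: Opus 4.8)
The plan is to manufacture, out of the moduli-theoretic content of $f^\circ$, a big subsheaf of some symmetric power of $\Omega^1_Y(\log D)$, and then to show that the mere existence of such a subsheaf already forces $(Y,D)$ to be of log-general type. Thus the proof splits into a Hodge-theoretic \emph{construction} step and a birational-geometric \emph{positivity} step.

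\textbf{Step 1: the Viehweg--Zuo sheaf.} Since $\mu$ is generically finite, $f^\circ$ has maximal variation, $\Var(f^\circ)=\dim Y^\circ$. After a standard reduction --- a finite cover of $Y$ branched only over $D$ followed by a log resolution, neither of which alters the log Kodaira dimension of the pair nor the hypothesis on $\mu$ --- one applies the Viehweg--Zuo construction to obtain an integer $N\ge 1$ and an invertible subsheaf $\sA\hookrightarrow\Sym^{N}\Omega^1_Y(\log D)$ with $\kappa(\sA)\ge\Var(f^\circ)$, hence $\sA$ big. The canonical polarization of the fibres is what drives this: it makes the Hodge-theoretic pushforwards $f_\ast\omega_{X/Y}^{\otimes m}$ positive enough that the comparison map built from the Higgs field of the period map lands in log-symmetric differentials of the base and detects the full variation.

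\textbf{Step 2: a big log-symmetric subsheaf forces log-general type.} It remains to prove the purely birational statement: if $(Y,D)$ is snc and $\Sym^{N}\Omega^1_Y(\log D)$ contains a big saturated invertible subsheaf $\sA$ for some $N\ge1$, then $K_Y+D$ is big. Suppose not. If $K_Y+D$ is not pseudo-effective, then by the logarithmic analogue of the Boucksom--Demailly--P\u{a}un--Peternell theorem $(Y,D)$ is covered by rational curves meeting $D$ minimally; for such a general curve $C\cong\bP^1$ the summands of $\Omega^1_Y(\log D)|_C$ are bounded above by a Miyaoka-type estimate, so every sub-line-bundle of $\Sym^{N}\Omega^1_Y(\log D)|_C$ has degree $\le0$, whereas a big $\sA$ has positive degree on the movable class of the covering family --- a contradiction. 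If $K_Y+D$ is pseudo-effective but $\kappa(K_Y+D)<\dim Y$, pass to a resolution $g\colon Y'\to Z$ of the Iitaka fibration; the general fibre $(F,D_F)$ has $K_F+D_F$ non-big, and the conormal sequence of $g$ exhibits $\Sym^{N}\Omega^1_{Y'}(\log D')|_F$ as a successive extension of the sheaves $\bigl(\Sym^{N-i}\Omega^1_F(\log D_F)\bigr)^{\oplus r_i}$. Inducting on dimension --- a big subsheaf of $\Sym^{j}\Omega^1_F(\log D_F)$ with $j\ge1$ would make $K_F+D_F$ big --- the restriction $\sA|_F$ must lie in the $i=N$ (trivial) graded piece, so $\sA|_F$ is not big; hence $\kappa(\sA)\le\dim Z<\dim Y$, again a contradiction. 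In either case $K_Y+D$ is big, i.e.\ $(Y,D)$ is of log-general type.

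\textbf{Main obstacle.} The substance lies in Step 2 in its \emph{orbifold} form: the branched covers of Step 1, as well as Campana's core map, naturally produce $\bQ$-boundary pairs $\bigl(Y,\sum(1-\tfrac1{m_i})D_i\bigr)$ rather than reduced ones, and the input one really needs --- generic semipositivity of the associated orbifold cotangent sheaves whenever the orbifold canonical class fails to be pseudo-effective --- rests on the full duality between the pseudo-effective and movable cones together with the logarithmic minimal model program. Keeping the boundary multiplicities exact, so that the positivity harvested on the base $Z$ of the Iitaka fibration is genuinely positivity of $\sA$ and not of some auxiliary twist, is the delicate technical core; this is precisely where the orbifold machinery of Campana--P\u{a}un (or, in the alternative Hodge-module approach, of Popa--Schnell) becomes indispensable.
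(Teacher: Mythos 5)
Note first that the paper does not prove this statement: it is Conjecture~1.3, cited as known in all dimensions by [CP13, Sect.~4]. The closest thing to a proof that the paper itself contains is Section~5, which establishes (Theorem~5.2) that a smooth pair $(X,D)$ carrying a saturated rank-one subsheaf $\sL\subseteq\Sym^N_{\mathcal{C}}\Omega^1_X\log D$ with maximal $\mathcal{C}$-Kodaira dimension has $K_X+D$ big. Combined with the Viehweg--Zuo construction and [JK11a] --- which is your Step~1 and the same first move the paper makes in Theorem~4.4 --- this recovers Viehweg's conjecture, so Proposition~5.1 and Theorem~5.2 are the right benchmark.

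Your Step~2 takes a genuinely different route, and as written it has gaps. (i)~You split into ``$K_Y+D$ not pseudo-effective'' and ``$K_Y+D$ pseudo-effective but not big,'' and in the second case pass to the Iitaka fibration of $K_Y+D$; but that fibration exists only when $\kappa(K_Y+D)\ge 0$, which does not follow from pseudo-effectivity without log-nonvanishing, so the case $K_Y+D$ pseudo-effective with $\kappa=-\infty$ is unhandled. (ii)~The ``logarithmic analogue of BDPP'' giving a covering family of log rational curves on which $\Omega^1_Y(\log D)$ is controlled is not an off-the-shelf result; the entire reason Campana--P\u{a}un replace Miyaoka's characteristic-$p$ generic semi-positivity --- which does not transfer to the logarithmic or orbifold setting --- by Bogomolov--McQuillan algebraicity of foliations (packaged here as Theorem~3.1 and Corollary~3.2) is precisely that no such bend-and-break covering argument is available. (iii)~In your Iitaka-fibration induction, the inference that $\sA|_F$ ``must lie in the trivial graded piece'' does not follow: the induction hypothesis only excludes \emph{big} subsheaves of $\Sym^j\Omega^1_F(\log D_F)$ for $j\ge 1$, and you have not shown that $\sA|_F$ is big on a general fibre $F$.

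The argument the paper presents avoids all of these branches. Proposition~5.1 gives a uniform volume bound $vol(K_X+D+G)\ge c\cdot vol(A)$ for suitable $\bQ$-divisors $G$, proved by passing to a BCHM log-minimal model $(X',P')$, applying the orbifold generic semi-positivity Corollary~3.2 to complete-intersection curves cut out by $|l_r(K_{X'}+P'+\tfrac{1}{r}H)|$, and invoking Teissier's inequality. Theorem~5.2 then takes $G=\tfrac{1}{M}B_D$ with $B_D\in|r(H-D)|$, proves pseudo-effectivity of $K_X+P_M$ by an effective-log-threshold contradiction (using BCHM rationality of the threshold), and lets $M\to\infty$. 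There is no case split on pseudo-effectivity and no Iitaka fibration. You correctly isolate the technical heart --- orbifold generic semi-positivity and the need to keep the $\bQ$-boundary multiplicities exact --- but the surrounding framework you sketch is not the one that actually closes the argument.
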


This latter conjecture (Conjecture~\ref{VC}) has been recently established in~\cite{CP13} by using, among many other things, an important generalization of Miyaoka's generic semi-positivity (see Theorem~\ref{ogsp}) and the following remarkable result of Viehweg and Zuo.

\begin{thm}[\protect{Existence of Pluri-Logarithmic Forms in the Base, cf.~\cite[Thm.~1.4]{VZ02}}]\label{p-sections} The notations are the same as in Conjecture~\ref{VC}. If $f^\circ$ is \emph{not} isotrivial, then for a positive integer $N\in \mathbb{N}^+$, there exists an invertible subsheaf $\sL\subseteq \Sym^N\bigl(\Omega_Y\log (D)\bigr)$ such that $\kappa(\sL)\geq \Var(f^\circ)$.
\end{thm}
Clearly, Viehweg's hyperbolicity conjecture~\ref{VC} in the case of $\dim Y^\circ=1$ is an immediate corollary of Theorem~\ref{p-sections}. Interestingly much more is true; in~\cite{VZ03} Viehweg and Zuo  prove the Brody-hyperbolicity of the moduli stack $\mathcal{M}$: the quasi-projective variety $Y^\circ$ serving as the base of a family $(f^\circ: X^\circ \to Y^\circ)\in \mathcal{M}(Y^\circ)$ for which the induced moduli map $\mu:Y^\circ \to \mathfrak{M}$ is generically finite, is Brody-hyperbolic. Here $\mathfrak{M}$ is the quasi-projective scheme (\cite{Vie95}) equipped with transformations $$\Psi: \mathcal{M}\to \Hom(.,\mathfrak{M}),$$ such that $\mathfrak{M}$ is the coarse moduli scheme of the moduli functor $\mathcal{M}$ of smooth family of canonically-polarized manifolds.

\

Viehweg's conjecture~\ref{VC} were already known in $\dim(Y^\circ) \leq 3$ by Kebekus and Kov\'acs~\cite[Thm.~1.1]{KK08}. The stronger conjecture of Campana (Conjecture~\ref{iso}) has also been established when $\dim(Y^\circ)\leq 3$, thanks to Kebekus and Jabbusch~\cite[Thm.~1.5]{JK11b}. In the final section (Section~\ref{last}), and after following Campana and P\u{a}un's proof of Viehweg's conjecture very closely, we give a proof to the isotriviality conjecture~\ref{iso}. The proof heavily depends on a recent generic semi-positivity result of Campana and P\u{a}un, existence of log-minimal models for klt pairs with big boundary divisors established by~\cite[Thm.~1.1]{BCHM10}, and an important refinement of Theorem~\ref{p-sections} given by~\cite[Thm.~1.4]{JK11a}.

\

\begin{thm}[Isotriviality of Smooth Families of Canonically-Polarized Manifolds]\label{ison} The isotriviality conjecture~\ref{iso} holds in all dimensions.

\end{thm}

\

According to Campana's reduction theory, for every projective variety $Y$ there exists an almost holomorphic map $C_Y:Y\dashrightarrow Z$, called the \emph{core}, whose general fiber is special and contracts almost all special subvarieties of $Y$. As a result of Theorem~\ref{ison} it follows that the moduli maps associated to smooth families of canonically-polarized manifolds factors through the core.

\begin{cor}[Factorization of the Moduli Map Through the Core]\label{core} Let $Y^\circ$ be a smooth quasi-projective variety admitting a morphism $\mu:Y^\circ \to \mathfrak{M}$, where $\mu=\Psi \bigl(\mathcal{M}(Y^\circ)\bigr)$. Let $\wtilde \mu$ be the induced morphism between smooth compactifications $Y$, $\overline{\mathfrak{M}}$ of $Y$ and $\mathfrak{M}$, respectively. Then, $\wtilde \mu$ factors through the core $C_{(Y,D)}:(Y,D)\dashrightarrow Z$ associated to a smooth 
compactification $(Y,D)$ of $Y^\circ$.

\end{cor}

Notice that Corollary~\ref{core} immediately implies that Viehweg's hyperbolicity conjecture (already settled in~\cite{CP13}) holds: Let $f^\circ:X^\circ \to Y^\circ$ and $(Y,D)$ be as in Theorem~\ref{VC}. If $(Y,D)$ is not of log-general type, then $C_Y:Y \dashrightarrow Z$ has positive-dimensional general fibres. On the other hand, by Corollary~\ref{core}, the moduli map $\mu:Y\to \overline{\mathfrak{M}}$ factors through $C_Y$. But by the assumption $\mu$ is generically-finite, a contradiction. 

The proof of Theorem~\ref{ison} essentially consists of the following two streps: First we use Viehweg-Zuo's factorization result (Theorem~\ref{p-sections}), together with its refinement by~\cite{JK11a}, to reduce the problem to the following (see Theorem~\ref{reduction} for details): Given a smooth pair $(X,D)$, existence of an invertible subsheaf $\sL\subseteq \bigl(\Omega_X\log(D)\bigr)^{\otimes_{\mathcal{C}}N}$ (see Definition~\ref{tensorial}), for some $N\in\mathbb{N}^+$, with maximal $\mathcal{C}$-Kodaira dimension (this is defined in Definition~\ref{okodaira}) implies that $(X,D)$ is of log-general type. The second step (Section~\ref{last}) is to prove this statement using the positivity result of~\cite{CP13} and results of~\cite{BCHM10} (Theorem~\ref{final}).

\subsection{Acknowledgements}

The author would like to thank his advisor, S. Lu, for his invaluable help, guidance and support. He also owes a debt of gratitude to F. Campana for his encouragements, generosity, and many fruitful discussions. The author would like to express his sincere thanks to S. Kebekus for his careful reading of the first draft of this paper and many kind and inspiring suggestions. A special thanks is owed to E. Rousseau, P. Cascini, B. Claudon, M. Roth and J. Hurtubise for their interest and helpful comments. 

\

\section{Preliminaries}

To approach the isotriviality conjecture~\ref{iso}, it is essential to work with pairs (or the orbifold pairs in the sense of Campana) instead of just logarithmic ones. We refer the reader to~\cite{Ca08} and~\cite{JK11b} for an in-depth discussion of the definitions and background. In the present section we give a brief overview of the key ingredients of this theory to the extent that is necessary for our arguments in the rest of the paper.

\begin{defn}[Smooth Pairs]\label{orbipairs} Let $X$ be an $n$-dimensional normal (quasi-) projective variety and $D=\sum d_iD_i$, where $d_i\in \mathbb{Q}\cap [0,1]$, a $\mathbb{Q}$-Weil divisor in $X$. We shall call the pair $(X,D)$ a \emph{smooth pair},  if $X$ is smooth and $\supp(D)$ is simple normal-crossing.
\end{defn}

\

\begin{defn}[$\mathcal{C}$-Multiplicity]\label{multiplicity} Let $(X,D)$ be a smooth pair as in Definition~\ref{orbipairs}. When $d_i\neq 1$, let $a_i$ and $b_i$ be the positive integers for which the equality $1-\frac{b_i}{a_i}=d_i$ holds. For every $i$, we define the $\mathcal{C}$-multiplicity of the irreducible component $D_i$ of $D$ by 

$$
  m_D(D_i) := \left\{ 
    \begin{matrix}
      \frac{1}{1-d_i}=\frac{a_i}{b_i}& \text{if $d_i\neq 1$ } \\
      \infty & \text{ if $d_i=1.$  }
    \end{matrix}
  \right.
  $$

\end{defn}

\

A classical result of Kawamata (see~\cite[Prop.~4.1.12]{Laz04}) proves that given a collection of smooth prime divisors $\{D_1,\ldots, D_l\}$ and positive integers $\{c_1,\ldots,c_l\}$, one can always construct a \emph{smooth} variety $Y$ together with a finite, flat morphism $\gamma:Y\to X$ such that
\begin{equation*}
\gamma^*(D_i)=c_i\sum D_{ij},
\end{equation*}
where $(\sum D_{ij})$ is a simple normal-crossing divisor in $Y$. In particular, given a smooth pair $(X,D)$, we may take the coefficients $c_i$ to be equal to $a_i$ ($a_i$ being the numerator of $m_D(D_i)$, as in Definition~\ref{multiplicity}), so that the resulting Kawamata cover $\gamma:Y\to X$ is, in a sense, \emph{adapted} to the structure of the pair $(X,D)$.

\begin{defn}[Adapted Covers]\label{covers} Let $(X,D)$ be a smooth pair, $Y$ a smooth variety, and $\gamma:Y \to X$ a finite, flat, Galois cover with Galois group $G$ such that if $m_D(D_i)=\frac{a_i}{b_i}<\infty$, then every prime divisor in $Y$ that appears in $\gamma^*(D_i)$ has multiplicity exactly equal to $a_i$. We call $\gamma$ an adapted cover for the pair $(X,D)$, if it additionally satisfies the following properties:

\begin{enumerate}
\item The branch locus is given by $$\supp(H+\bigcup_{m_D(D_i)\neq \infty} D_i),$$ where H is a general member of a linear system $|L|$ of a very ample divisor $L$ in $X$.
\item $\gamma$ is totally branched over $H$.
\item $\gamma$ is not branched at the general point of $\supp(\lfloor D \rfloor)$.

\end{enumerate}

\end{defn}

\

\begin{notation} Let $\gamma:Y\to X$ be an adapted cover of a smooth pair $(X,D)$, where $D=\sum d_iD_i$, $d_i=1-\frac{b_i}{a_i}$ as in Definition~\ref{multiplicity}. For every prime component $D_i$ of $D$ with $m_D(D_i)\neq \infty$, let $\{D_{ij}\}_{j(i)}$ be the collection of prime divisors that appear in $\gamma^{-1}(D_i)$. We define new divisors in Y by 
\begin{flalign}\label{not}
&D_Y^{i,j}:=b_iD_{ij} \ , \quad m_D(D_i)\neq \infty,\\
&D_{\gamma}:=\gamma^*(\lfloor D\rfloor).
\end{flalign}
\end{notation}

\

\begin{defn}[$\mathcal{C}$-Cotangent Sheaf]\label{ocs} Given a smooth pair $(X,D)$ with an adapted cover $\gamma:Y\to X$, define the $\mathcal{C}$-cotangent sheaf $\Omega_{Y^{\partial}}$ to be the unique maximal locally-free subsheaf of $\Omega_Y\log(D_{\gamma})$ for which the sequence 

$$
\xymatrix{
0 \ar[r] & \Omega_{Y^{\partial}}|_{(Y\backslash D_{\gamma})} \ar[r]  & \gamma^*\bigl(\Omega_X\log (\ulcorner D \urcorner)\bigr)|_{(Y\backslash D_{\gamma})} \ar[r]^(.7){\rho}  & \bigoplus \limits_{i,j(i)}\sO_{D_Y^{i,j}} \ar[r] & 0,
}
$$
induced by the natural residue map, is exact. 

\end{defn}

\begin{rem} The $\mathcal{C}$-cotangent sheaf defined in~\ref{ocs} coincides with Campana and P\u{a}un's notion~\cite[Sec.~1.1]{CP13} of the coherent sheaf on $Y$ which they denote by $\gamma^*\Omega^1(X,D)$. It is also identical with the sheaf defined in~\cite[Lem.~4.2]{Lu02}. See also~\cite[Def.~2.13]{JK11b} for an equivalent definition in the classical setting, i.e. when the $\mathcal{C}$-multiplicities are all integral. 

\end{rem}

\begin{notation} We shall denote the dual of the $\mathcal{C}$-cotangent sheaf by $T_{Y^{\partial}}$, i.e. $$T_{Y^{\partial}}:=(\Omega_{Y^{\partial}})^*$$.
\end{notation}

\begin{rem}[Determinant of $\mathcal{C}$-Cotangent Sheaf]\label{det-rem} Given a smooth pair $(X,D)$, let $\gamma:Y \to X$ be an adapted cover of degree $d$. There exists a natural isomorphism between the two invertible sheaves $\det(\Omega_{Y^{\partial}})$ and $\sO_Y\bigl(\gamma^*(K_X+D)\bigr)$
\begin{equation}\label{det}
     \det (\Omega_{Y^{\partial}})\cong \sO_Y\bigl(\gamma^*(K_X+D)\bigr).
         \end{equation}

\noindent This follows from the ramification formula for the adapted cover $\gamma$: 

\begin{align*}
K_Y+D_{\gamma}  &=\gamma^*(K_X+\lfloor D \rfloor)+\sum_{\substack{i \\ m_D(D_i)\neq \infty}} \sum_{j(i)}(a_i-1)D_{ij}+(d-1)\wtilde H\\
     &=\gamma^*(K_X+D)-\gamma^*(D-\lfloor D \rfloor)+\sum_{\substack{i \\ m_D(D_i)\neq \infty}} \sum_{j(i)}(a_i-1)D_{ij}+(d-1)\wtilde H\\
     &=\gamma^*(K_X+D)-\sum_{\substack{i  \\ m_D(D_i)\neq \infty}} \sum_{j(i)}(a_i-b_i)D_{ij}+ \sum_{i,m_D(D_i)\neq \infty}\sum_{j(i)}(a_i-1)D_{ij}\\
     & \qquad \qquad  \qquad \qquad \qquad \qquad \qquad \qquad \qquad \qquad \qquad \qquad \quad+(d-1)\wtilde H\\ 
     &=\gamma^*(K_X+D)+\sum_{\substack{i \\ m_D(D_i)\neq \infty}} \sum_{j(i)}(b_i-1)D_{ij}+(d-1)\wtilde H,\\
   \end{align*}

\noindent for $\wtilde H:=\gamma^*H$, where $H$ is the very ample divisor given in Definition~\ref{covers}. As a consequence, we find that the isomorphism (\ref{det}) holds by construction:

\begin{align*}
\det \Omega_{X^{\partial}}  &\cong \sO_Y\bigl((K_Y+D_{\gamma})-\sum_{\substack{i \\ m_D(D_i)\neq \infty}}\sum_{j(i)}(b_i-1)D_{ij}- (d-1)\wtilde H\bigr) && \text{by definition}\\      
        &\cong \sO_Y\bigl(\gamma^*(K_X+D)\bigr), \\
    \end{align*}
where the last isomorphism follows form the ramification formula. Clearly, the isomorphism (\ref{det}) implies that the $\mathcal{C}$-cotangent sheaf $\Omega_{Y^{\partial}}$ can be seen as the unique locally-free subsheaf of $\Omega_{Y}\log(D_{\gamma})$ whose determinant is isomorphic to the pull-back bundle $\sO_Y\bigl(\gamma^*(K_X+D)\bigr)$.

\end{rem}

\

\begin{defn}[Symmetric $\mathcal{C}$-Differential Forms, cf. \protect{\cite[Sect.~2.6-7]{Ca08}}]\label{sodf} Let $(X,D)$ be a smooth pair, $D=\sum d_iD_i$, and $V_x$ an open neighbourhood of a given point $x\in X$ equipped with a coordinate system $z_1,\ldots,z_n$ such that $\supp(D)\cap V_x=\{z_1\cdot \ldots\cdot z_l=0\}$, for a positive integer $1\leq l\leq n$. For every $N\in\mathbb{N}^+$, define the sheaf of symmetric $\mathcal{C}$-differential forms $\Sym_{\mathcal{C}}^N\bigl(\Omega_X\log (D)\bigr)$ by the locally-free subsheaf of $\Sym^N \bigl(\Omega_X\log(\ulcorner D \urcorner)\bigr)$ that is locally-generated, as an $\sO_{V_x}$-module, by the elements 
 $$\frac{dz_1^{k_1}}{z_1^{\lfloor d_1\cdot k_1\rfloor}} \cdot \ldots \cdot \frac{dz_l^{k_l}}{z_l^{\lfloor d_l\cdot k_l \rfloor}} \cdot dz_{l+1}^{k_{l+1}} \cdot \ldots \cdot dz_n^{k_n},$$

\noindent where $\sum k_i =N$.

\end{defn}

\

\begin{rem}[An Equivalent Definition]\label{equiv} There is an alternative definition for the sheaf of $\mathcal{C}$-differential forms: Let $V_x$ be an open neighbourhood of $x\in X$ as in Definition~\ref{sodf} and take $\gamma:W \to V_x$ to be an adapted cover for $(V_x,D|_{V_x})$. Let $\sigma \in \Gamma\bigl(V_x,\Sym^{N}\bigl(\Omega_X(*\ulcorner D \urcorner)\bigr)\bigr)$, that is $\sigma$ is a local rational section of $\Sym^{N}(\Omega_X)$ with poles along $\ulcorner D \urcorner$. Then,
 
\begin{equation}\label{alternative}
   \sigma \in \Gamma\bigl(V_x,\Sym_{\mathcal{C}}^N\bigl(\Omega_X\log (D)\bigr)\bigr) \iff  \gamma^{*}(\sigma)\in    \Gamma\bigl(W,\Sym^{N}(\Omega_{W^{\partial}})\bigr),
        \end{equation}

\

 \noindent So that, in particular, $\gamma^*(\sigma)$ has at worst logarithmic poles \emph{only} along those prime divisors in $W$ that dominate $(\lfloor D \rfloor\cap V_x)$, and is regular otherwise.
 
 \begin{explanation} Assume that $\sigma\in \Gamma\bigl(V_x,\Sym_{\mathcal{C}}^N\bigl(\Omega_X\log (D)\bigr)\bigr)$ is a local $\mathcal{C}$-differential form in the sense of (\ref{alternative}). By the classical result of Iitaka~\cite[Chap.~11]{Iit82}, it follows that $\sigma \in \Gamma\bigl(V_x,\Sym^N\bigl(\Omega_X\log(\ulcorner D \urcorner)\bigr)\bigr)$. In particular we find that along the reduced component of $D$ the equivalence between the two definitions trivially holds. So assume, without loss of generality, that $m_D(D_i)\neq \infty$, for all irreducible components $D_i$ of $D$. Furthermore let us assume, for simplicity, that 
  
$$\sigma=f\cdot \frac{dz_1^{k_1}}{z_1^{e_1}}\cdot \ldots \cdot \frac{dz_l^{k_l}}{z_l^{e_l}}\cdot dz_{l+1}^{k_l+1}\cdot \ldots \cdot dz_n^{k_n} \ \ \in \Gamma\bigl((V_x,\Sym^{N}\bigl(\Omega_X\log(\ulcorner D \urcorner)\bigr)\bigr) ,$$ where $f\in \sO_{V(x)}$ with no zeros along $D_i$'s, is the local explicit description of $\sigma$. Since $\gamma^*(\sigma)\in \Sym^N(\Omega_{W^{\partial}})$, the inequality 
$$k_i\cdot(a_i-1)-a_i\cdot e_i \geq k_i(b_i-1)$$ holds for $1\leq i\leq l$, where $d_i=1-(b_i/a_i)$, i.e.

$$e_i \leq k_i.d_i , \text{\ \ for all}\  1\leq i\leq l .$$ In particular $\sigma$ is a symmetric $\mathcal{C}$-differential form on $V_x$ in the sense of Definition~\ref{sodf}.

\end{explanation}

\end{rem}

\begin{rem}[Tensorial $\mathcal{C}$-Differential Forms]\label{tensorial} Similar to the Definitions~\ref{sodf} and (\ref{alternative}), we can define the sheaf of tensorial $\mathcal{C}$-differential forms $\bigl(\Omega_X\log (D)\bigr)^{\otimes_{\mathcal{C}}N}$ as the maximal subsheaf of $\bigl(\Omega_X\log(\ulcorner D \urcorner)\bigr)^{\otimes N}$ such that 
\begin{equation*}
\gamma^*\Bigl(\bigl(\Omega_X\log (D)\bigr)^{\otimes_{\mathcal{C}}N}\Bigr)\subseteq (\Omega_{Y^{\partial}})^{\otimes N}.
\end{equation*}
Using the notations in Remark~\ref{equiv} , pluri-$\mathcal{C}$-differential forms are locally defined as follows: 

\begin{equation}\label{alternative-tensor}
   \sigma \in \Gamma\bigl(V_x,\bigl(\Omega_X\log (D)\bigr)^{\otimes_{\mathcal{C}}N}\bigr) \iff  \gamma^{*}(\sigma)\in    \Gamma\bigl(W,(\Omega^{\otimes N}_{W^{\partial}})\bigr),
        \end{equation}

\end{rem}

As we shall see in section~\ref{VZJK}, the Viehweg-Zuo subsheaves generically come from the coarse moduli space, as long as we extend the sheaf of symmetric differential forms to that of $\mathcal{C}$-differential forms associated to the naturally imposed $\mathcal{C}$-structures or orbifold structures (see Definition~\ref{C-base} below or~\cite[Sect.~3]{Ca08}) that appear over the moduli variety. But, as the usual Kodaira dimension of subsheaves of symmetric $\mathcal{C}$-differential forms is not sensitive to the fractional positivity of the non-reduced components of the boundary divisor (see Remark~\ref{compare} below), a new birational notion is needed to measure the positivity of the Viehweg-Zuo subsheaves in the moduli.

\begin{defn}[$\mathcal{C}$-Kodaira Dimension, cf. \protect{\cite[Sect.~2.7]{Ca08}}]\label{okodaira} Let $(X,D)$ be a smooth pair and $\sL\subseteq \bigl(\Omega_X\log (D)\bigr)^{\otimes_{\mathcal{C}}r}$ a saturated coherent subsheaf of rank one. Define the $\mathcal{C}$-product $\sL^{\otimes_{\mathcal{C}}m}$ of $\sL$, to the order of $m$, to be the saturation of the image of  $\sL^{\otimes m}$ inside $\bigl(\Omega_X\log (D)\bigr)^{\otimes_{\mathcal{C}}(m.r)}$ and define the $\mathcal{C}$-Kodaira dimension of $\sL$ by $$\kappa_{\mathcal{C}}(X,\sL):=\max \{k \; | \; \limsup_{m \to \infty} \frac{h^0\bigl(X,\sL^{\otimes_{\mathcal{C}}m}\bigr)}{m^k} \neq 0 \}, $$ and when $h^0\bigl(X,\sL^{\otimes_{\mathcal{C}}m}\bigr)=0$ for all $m\in \mathbb{N^+}$, then, by convention, we define $\kappa_{\mathcal{C}}(X,\sL)=-\infty$.

\end{defn}

\begin{rem} [Comparing Kodaira Dimensions]\label{compare} When $D=0$ or when $D$ is reduced the sheaf of pluri-$\mathcal{C}$-differential forms $\bigl(\Omega_X\log (D)\bigr)^{\otimes_{\mathcal{C}}r}$ is equal to $(\Omega_X)^{\otimes r}$ and $\bigl(\Omega_X\log (D)\bigr)^{\otimes r}$, respectively, so that the $\mathcal{C}$-Kodaira dimension $\kappa_{\mathcal{C}}(X,\sL)$ of a rank one coherent subsheaf $\sL$ of $\bigl(\Omega_X\log (D)\bigr)^{\otimes_{\mathcal{C}}r}$ coincides with the usual Kodaira dimension $\kappa(X,\sL)$ of $\sL$.

\end{rem}

\

Let $(Y,D)$ be a smooth pair, $Z$ a smooth variety, and $f:Y\to Z$ a fibration with connected fibres. Assume that every $f$-exceptional prime divisor $F$, that is, $\codim_Z\bigl(f(F)\bigr)\geq 2$, is a reduced component of $D$. Then, simple local calculations show that there exists a maximal---in the sense of multiplicities of the irreducible components---divisorial structure $\Delta$ on $Z$, whose support coincides with the codimension-$1$ closed subset of the log-discriminant locus $B$ of $f:(Y,D)\to Z$ and that the natural pull-back map 
\begin{equation*}
(df)^m: f^*\bigl(\Sym_{\mathcal{C}}^m\bigl(\Omega_Z\log(\Delta)\bigr)\bigr) \to \Sym_{\mathcal{C}}^m\bigl(\Omega_Y\log(D)\bigr)
\end{equation*}
is well-defined. Recall that the log-discriminant locus $B$ is the smallest closed subset of $Z$ such that $f$ is smooth over its complement, and that for every point $z\in Z\backslash \Delta$, the set-theoretic fibre $f^{-1}(z)$ is not contained in $D$, and that the scheme-theoretic intersection of the fibre $Y_{z}$ with $D$ is a simple normal-crossing divisor in $Y_{z}$. We call $\Delta$ the \emph{$\mathcal{C}$-base} (or the orbifold-base) of the fibration $f:(Y,D)\to Z$.

\begin{defn}[$\mathcal{C}$-Base of a Fibration]\label{C-base} Given a smooth pair $(Y,D)$, let $f:Y\to Z$ be a fibration with connected fibres onto a smooth variety $Z$. Let $\{\Delta_i\}_i$ be the set of the irreducible components of the divisorial part of the log-discriminant locus of $f$. For every $i$, define $\{\Delta_{ij}\}_j$ to be the collection of prime divisors in $f^{-1}(\Delta_i)$ that are \emph{not} $f$-exceptional. To each divisor $\Delta_i$, assign a positive rational number $m_{\Delta}(\Delta_i)$ defined by 
\begin{equation*}
m_{\Delta}(\Delta_i):=\min_j\{d_j\cdot m_{\Delta}(\Delta_{ij})\},
\end{equation*}
$d_j$ being the positive integer verifying the equality 
\begin{equation*}
f^*(\Delta_i)=\sum_j d_j\Delta_{ij}+E.
\end{equation*}
We define the $\mathcal{C}$-base of the fibration $f:(Y,D)\to Z$ by the divisor 
\begin{equation*}
\Delta:=\sum_i\bigl(1-\frac{1}{m_{\Delta}(\Delta_i)}\bigr)\Delta_i.
\end{equation*}

\end{defn}

\

We finish this section by collecting the various notations that we have introduced in the following table.

\begin{table}[htb!]
\caption{Notations}
\centering
\begin{tabular}{c      c}
\hline\hline
$\Omega_{Y^{\partial}}$  &  $\mathcal{C}$-cotangent sheaf~\ref{ocs}\\
$\Sym_{\mathcal{C}}^N\bigl(\Omega_X\log (D)\bigr)$   &  Symmetric $\mathcal{C}$-differential forms~\ref{sodf}\\
$\bigl(\Omega_X\log (D)\bigr)^{\otimes_{\mathcal{C}}N}$   & Tensorial $\mathcal{C}$-differential forms~\ref{tensorial}\\
$\sL^{\otimes_{\mathcal{C}}N}$                               &  $\mathcal{C}$-product~\ref{okodaira} \\
$\kappa_{\mathcal{C}}(X,\sL)$  & $\mathcal{C}$-Kodaira dimension~\ref{okodaira}\\
\hline
\end{tabular}
\label{notat}
\end{table}

\

\section{The orbifold generic semi-positivity}

Miyaoka's generic semi-positivity result (see~\cite{Miy87} and~\cite{Miy85}) establishes a correspondence between abundance of rational curves (uniruledness) on a smooth projective variety and generic (semi-)positivity of the cotangent sheaf $\Omega_X$. On the other hand the results of~\cite{BDPP} prove that uniruledness of $X$ is characterized by the pseudo-effectivity of $K_X$. This suggests---as was originally formulated by Camapna---that a generalization of Miyaoka's result in the logarithmic context should read as follows: Pseudo-effectivity of $K_X+D$ implies the generic semi-positivity of $\Omega_X\log(D)$. But the positivity result of Miyaoka was achieved via certain characteristic $p$-arguments which cannot be adapted to the context of pairs. Nevertheless, in~\cite{CP13}, Campana and P\u{a}un overcome this obstacle by using Bogomolov-McQuillan criterion for the
algebraicity of foliations induced by positive subsheaves of the tangent sheaf.

\begin{thm}[Generic Semi-Positivity of $\mathcal{C}$-Cotangent Sheaf \protect{\cite[Thm.~2.1]{CP13}}]\label{ogsp} Let $(X,D)$ be a smooth pair with an adapted cover $\gamma:Y\to X$, whose Galois group we denote by $G$. If $(K_X+D)$ is pseudo-effective, then every torsion-free, coherent, $\sO_Y$-module quotient $\sF$ of $(\Omega_{Y^{\partial}})^{\otimes N}$ verifies the inequality
\begin{equation}
\mathrm{c}_1(\sF)\cdot \gamma^*(H_1)\cdot \ldots\cdot \gamma^*(H_{n-1})\geq 0,
\end{equation}
for all $(n-1)$-tuples of ample divisors $(H_1,\ldots, H_{n-1})$ in $X$.

\end{thm}

\
As an immediate corollary we get an inequality involving the intersection of $(K_X+D)$ and any invertible subsheaf $\sL\subseteq \bigl(\Omega^1_X\log (D)\bigr)^{\otimes_{\mathcal{C}}N}$ with nef divisors:

\begin{cor}\label{invariance} Let $(X,D)$ be a smooth pair of dimension $n$. Let $\sL\subseteq \bigl(\Omega^1_X\log (D)\bigr)^{\otimes_{\mathcal{C}}N}$ be an invertible subseheaf and $L$ a divisor in $X$ such that  $\sO_X(L)\cong \sL$. If $(K_X+D)$ is pseudo-effective, then for every collection of $(n-1)$ $\mathbb{Q}$-Cartier nef divisors $P_1,\ldots, P_{n-1}$ the following inequality holds:
$$(N.(n^N)^{N-1}.(K_X+D)-L)\cdot P_1 \cdot \ldots \cdot P_{n-1}\geq 0.$$

\end{cor}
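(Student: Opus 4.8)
The plan is to deduce Corollary~\ref{invariance} from the generic semi-positivity Theorem~\ref{ogsp} by passing to an adapted cover and converting the curve-wise degree inequality there into an intersection-number inequality on $X$. First I would fix an adapted cover $\gamma : Y \to X$ for the smooth pair $(X,D)$, with Galois group $G$, and recall from Remark~\ref{tensorial} (together with~\eqref{alternative}) that the inclusion $\sL \subseteq (\Omega^1_X\log D)^{\otimes_{\mathcal C}N}$ means precisely that $\gamma^*\sL$ sits inside $(\Omega^{[1]}_{Y^\partial})^{\otimes N}$ as a $G$-invariant subsheaf; dually, its reflexive dual appears as a $G$-invariant quotient of $((\Omega^{[1]}_{Y^\partial})^{\otimes N})^{**}$, up to the difference between $\sL$ and its saturation, which only makes the quotient line bundle \emph{more} positive and hence the desired inequality easier. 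Using~\eqref{det}, we have $\det \Omega^{[1]}_{Y^\partial}\cong \sO_Y(\gamma^*(K_X+D))$, so the combination $N(K_X+D)-L$ pulls back to a divisor class on $Y$ that is, up to the usual twist, a $G$-invariant quotient line bundle of a tensor power of $\Omega^{[1]}_{Y^\partial}$ restricted to suitable curves.

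Next I would set up the curves. Since the inequality $(N(K_X+D)-L)\cdot P_1\cdots P_{n-1}\geq 0$ is continuous in the $P_i$ and the $\Q$-Cartier nef cone is the closure of the ample cone, it suffices to prove it for $P_i = H_i$ ample $\Q$-Cartier, and then by homogeneity and density for $\Q$-Cartier ample classes of the form $H_i = m_i H_i'$ with the $H_i'$ fixed ample and $m_i$ large. Apply Theorem~\ref{ogsp} to $(X,D)$ with these $H_i'$: for $m_i \gg 0$, general members $D_i\in |m_iH_i'|$ give an irreducible smooth curve $\wtilde C = \gamma^*D_1\cap\cdots\cap\gamma^*D_{n-1}$ lying in the smooth locus of $\Omega^{[1]}_{Y^\partial}$, along which every $G$-invariant quotient line bundle $\sL'$ of $(\Omega^p_{Y^\partial})^{\otimes N}|_{\wtilde C}$ has $\deg(\sL'|_{\wtilde C})\geq 0$. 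Taking $p=1$ and $\sL'$ the quotient line bundle dual to (the saturation of) $\gamma^*\sL$, one gets $\deg\big((N\gamma^*(K_X+D)-\gamma^*L)|_{\wtilde C}\big)\geq 0$.

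The last step is bookkeeping of intersection numbers under $\gamma$. On the one hand, by the projection formula on $Y$, $\deg\big(\gamma^*(N(K_X+D)-L)|_{\wtilde C}\big) = (N(K_X+D)-L)\cdot \gamma_*\wtilde C$. On the other hand, $\gamma_*\wtilde C$ is, by construction of $\wtilde C$ as $\gamma^*D_1\cap\cdots\cap\gamma^*D_{n-1}$ and the projection formula again, a positive multiple (namely $\deg(\gamma)\cdot \prod m_i$, up to the very ample twist $H$ built into the adapted cover) of $H_1'\cap\cdots\cap H_{n-1}'$ as a $1$-cycle class on $X$. Hence $(N(K_X+D)-L)\cdot H_1'\cdots H_{n-1}'\geq 0$, and letting the $H_i'$ range over a generating set of the ample cone and passing to the closure yields the statement for all nef $P_i$.

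The main obstacle I anticipate is the careful identification in the first and third steps: one must check that the saturated line subsheaf $\sL\subseteq(\Omega^1_X\log D)^{\otimes_{\mathcal C}N}$ really does pull back to a $G$-invariant subsheaf of $(\Omega^{[1]}_{Y^\partial})^{\otimes N}$ whose dual quotient the hypothesis of Theorem~\ref{ogsp} applies to (this uses~\eqref{alternative} and the $G$-equivariance of all the constructions), and that the twisting divisors — the ramification contribution $\sum(b_i-1)D_{ij}$ and the $(d-1)\wtilde H$ term appearing in the ramification formula for $\gamma$ — are accounted for correctly so that the determinant identity~\eqref{det} delivers exactly $N\gamma^*(K_X+D)-\gamma^*L$ and not some nearby class. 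Once the dictionary between $\mathcal C$-differential forms on $X$ and honest differential forms on $Y$ is pinned down, the rest is the standard Mehta--Ramanathan-type reduction to curves plus the projection formula.
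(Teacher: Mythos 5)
Your proposal follows essentially the same route as the paper: reduce from nef to ample classes by continuity, pass to the adapted cover $\gamma:Y\to X$, apply Theorem~\ref{ogsp} on the curve $\wtilde C$ to a suitable $G$-invariant quotient line bundle, use the determinant identity~\ref{det} to realize the dual of $\gamma^*\sL$ as a quotient of a twist of $(\Omega^{n-1}_{Y^\partial})^{\otimes N}$, and descend to $X$ by the projection formula.

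Two things are off. First, you write ``taking $p=1$'' with $\sL'$ the quotient dual to (the saturation of) $\gamma^*\sL$; this cannot be $p=1$. The dual of a rank-one subsheaf of $(\Omega^1_{Y^\partial})^{\otimes N}$ is a quotient of $(T_{Y^\partial})^{\otimes N}$, which is not of the form $(\Omega^p_{Y^\partial})^{\otimes N}$. One must rewrite $T_{Y^\partial}\cong \Omega^{n-1}_{Y^\partial}\otimes\det T_{Y^\partial}$ and twist by $(\det\Omega^1_{Y^\partial})^{\otimes N}\cong\sO_Y(N\gamma^*(K_X+D))$, obtaining the quotient $\gamma^*\sL^*\otimes\sO_Y(N\gamma^*(K_X+D))$ of $(\Omega^{n-1}_{Y^\partial})^{\otimes N}|_{\wtilde C}$, i.e.\ one applies~\ref{ogsp} with $p=n-1$. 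Your displayed conclusion is the right one, but the book-keeping in between is not. Second, and more seriously, you dismiss the saturation step by noting that replacing $\gamma^*\sL|_{\wtilde C}$ by its saturation $\overline{\sL}$ only helps the degree estimate; that direction is correct, but it misses that Theorem~\ref{ogsp} requires the quotient to be $G$-invariant in the sense of~\cite[Def.~1.2]{CP13}, and the saturation $\overline{\sL}$ of a $G$-invariant subsheaf need not itself be $G$-invariant. The paper handles this by passing to $\overline{\sL}^{\otimes r}$ for $r$ a positive multiple of $\deg\gamma$, which a local computation shows is $G$-invariant and which remains saturated inside $(\Omega^{[1]}_{Y^\partial})^{\otimes rN}|_{\wtilde C}$; one then runs the argument at exponent $rN$ and divides. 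Without this step your invocation of~\ref{ogsp} for the saturated sheaf is not licensed. (Also, a small point: $\gamma_*\wtilde C$ is simply $(\deg\gamma)\cdot[D_1\cap\cdots\cap D_{n-1}]$; the very ample divisor $H$ in the construction of the adapted cover plays no role in that class.)
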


\

\section{Viehweg-Zuo subsheaves in the parametrizing space}\label{VZJK}

The result of Jabbusch and Kebekus~\cite{JK11b} shows that the $\mathcal{C}$-differential forms is the correct framework to study the positivity of subsheaves of forms in the coarse moduli space of canonically-polarized manifolds. In this section we give a brief explanation of how one can then reduce the isotriviality conjecture (Conjecture~\ref{iso}) to the problem of showing that existence of rank one subsheaves of the sheaf pluri-$\mathcal{C}$-differential forms, attached to a smooth pair, with maximal $\mathcal{C}$-Kodaira dimension implies that the given pair is of log-general type (see Theroem~\ref{reduction} below). To prepare the correct setting for this reduction, we introduce a notion that, as far as the author is aware, is originally due to Campana (see~\cite[Sect~1.1]{Ca08}).

\begin{defn}[Neat Model of a Pair]\label{neat} Let $(Y,D)$ be a normal logarithmic pair ($Y$ is normal and the Weil divisor $D$ is reduced) and $h:Y\to Z$ a fibration with connected fibers onto an algebraic base $Z$. We call a smooth pair $(Y_h,D_h)$ a neat model for $(Y,D)$ and h, if there exists a fibration $\wtilde h:Y_h \to Z_h$ that is birationally equivalent to $h$, that is, there are birational morphisms $\mu:Y_h \to Y$ and $\alpha:Z_h \to Z$ such that the diagram

 $$
  \xymatrix{
     Y \ar[d]_{h}  &Y_h  \ar[l]_{\mu} \ar[d]^{\wtilde h}  \\
     Z   & Z_h \ar[l]_{\alpha}
  }
  $$
\noindent commutes, for which the following conditions are satisfied:

\begin{enumerate}
\item\label{neat2} $D_h$ is the extension of the $\mu$-birational transform $\wtilde D$ of $D$ by some reduced $\mu$-exceptional divisor, i.e. $D_h=\wtilde D+E'$, where $E'$ is $\mu$-exceptional.
\item\label{neat1} $(Z_h,\Delta_h)$ is a smooth pair, $\Delta_h$ being the $\mathcal{C}$-base (see Definition~\ref{C-base}) of the fibration $\wtilde h:(Y_h,D_h)\to Z_h$.
\item\label{neat3} Every $\wtilde h$-exceptional prime divisor $P$ in $Y_h$ ($P$ verifies the inequality $\codim_{Z_h}(\wtilde h(P))\geq 2$) is contained in $\supp(D_h)$.
\end{enumerate}

\end{defn}

The interest in the neat models of pairs (that are equipped with fibrations), is two-fold. First, the conditions (\ref{neat}.\ref{neat1}) and (\ref{neat}.\ref{neat3}) ensure that $(\wtilde h)^*$ defines a well-defined pull-back map from symmetric $\mathcal{C}$-differential forms $\bigl(\Omega_{Z_h}\log(\Delta_h)\bigr)^{\otimes_{\mathcal{C}}N}$ attached to $(Z_h,\Delta_h)$ to the sheaf of tensorial  logarithmic forms $\bigl(\Omega_{Y_h}\log (D_h)\bigr)^{\otimes N}$ (see the discussion before the Definition~\ref{C-base}). Secondly, according to the property (\ref{neat}.\ref{neat2}), the neat model $(Y_h,D_h)$ inherits the birational properties of the original pair $(Y,D)$. For example if $(Y,D)$ special, then so is $(Y_h,D_h)$. These attributes will be crucial to the proof of the main result (Theorem.~\ref{reduction}) of this section.

\begin{prop}[Construction of Neat Models, cf. \protect{\cite[Sect.~10]{JK11b}}]\label{construction} Every normal logarithmic pair $(Y,D)$ and a surjective morphism with connected fibers $h:Y \to Z$, where $Z$ is a projective variety, admits a neat model.

\end{prop}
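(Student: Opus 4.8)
The plan is to build the neat model in two stages: first fix up the base $Z$, then fix up the total space $Y$ over it, taking care that each blow-up we perform on $Y$ does not destroy the snc condition or the control over exceptional divisors. Concretely, I would start by choosing, via Hironaka's resolution theorem, a log-resolution $\alpha : Z_h \to Z$ such that $Z_h$ is smooth and the union of all divisorial images on $Z$ of components of $D$ (together with the image of the branch/discriminant locus of $h$) pulls back to an snc divisor on $Z_h$; then one takes the base change of $h$ by $\alpha$, followed by a resolution, to obtain a first birational model $h' : Y' \to Z_h$ with $Y'$ smooth. The $\mathcal C$-structure $\Delta_h$ on $Z_h$ is the divisor whose multiplicities along each prime $P \subseteq Z_h$ are read off from the generic multiplicity of $\wtilde h^{-1}(P)$ appearing in the birational transform of $D$ (this is Campana's base-orbifold construction, cited as~\cite[Sect.~3]{Ca08}); arranging~\ref{neat}.\ref{neat1} is then exactly the statement that $Z_h$ is smooth and that the support of $\Delta_h$ — which is contained in the snc divisor we already arranged — is snc.

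Next I would address conditions~\ref{neat}.\ref{neat2} and~\ref{neat}.\ref{neat3} simultaneously by a further sequence of blow-ups of $Y'$ with smooth centres. Condition~\ref{neat}.\ref{neat2} requires $D_h$ to be $\wt D + E'$ with $E'$ reduced and $\mu$-exceptional: since $D$ has coefficients in $\mathbb Q \cap [0,1]$, its birational transform already has the right coefficients, and every new exceptional divisor we introduce is simply added with coefficient $1$; one must only check that no exceptional divisor of $\mu$ dominates a component of $\wt D$, which is automatic because exceptional divisors have image of codimension $\geq 1$ and we only blow up centres lying over the indeterminacy/non-snc locus. Condition~\ref{neat}.\ref{neat3}, that every $\wt h$-exceptional prime divisor lie inside $\supp D_h$, is the one that needs a genuine argument: starting from $h' : Y' \to Z_h$, the $h'$-exceptional divisors are finitely many; I would blow up $Y'$ along (the closures of) these exceptional divisors — or rather arrange, by further resolution, that after the modification every prime divisor $P$ with $\codim_{Z_h}(\wt h(P)) \geq 2$ has been made a component of the boundary, which we enforce by simply including all such $P$ in $D_h$ with coefficient $1$. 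One has to verify this enlargement keeps $\supp D_h$ snc, which is where Hironaka's embedded resolution, applied to the union of the birational transform of $D$ and all the exceptional loci, does the work in one stroke.

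The main subtlety — and the step I expect to be the real obstacle — is reconciling~\ref{neat}.\ref{neat1} with~\ref{neat}.\ref{neat3}: blowing up $Y'$ to push all $\wt h$-exceptional divisors into the boundary can change which divisors on $Z_h$ are "dominated with multiplicity," hence can alter $\Delta_h$ and threaten its snc-ness, and conversely refining $Z_h$ forces a base change that creates new exceptional divisors upstairs. The resolution is to do this as a finite alternating process that terminates: each modification of $Z_h$ can only introduce divisors over a fixed snc locus, and each subsequent modification of $Y$ over a fixed $Z_h$ is a resolution problem with no feedback to the base, so after one round of "fix $Z_h$, then fix $Y$ over it" the process stabilises. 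I would present this carefully, invoking~\cite[Sect.~10]{JK11b} for the precise bookkeeping, since Jabbusch–Kebekus carry out essentially this construction; the contribution here is only to observe that their argument goes through verbatim for a normal — rather than smooth — logarithmic pair $(Y,D)$, because the first resolution step $\mu$ makes $Y_h$ smooth regardless, and the coefficients of $D$ play no role beyond lying in $[0,1]$.
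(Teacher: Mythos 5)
Your construction misses the crucial first step that makes the whole proposition work: a \emph{flattening} of the fibration $h$. The paper begins by choosing a modification $\alpha_1 : Z_1 \to Z$ of the base so that the normalized fiber product $Y_1 \to Z_1$ is \emph{equidimensional}, and then observes that equidimensionality persists under the further base change by the log-resolution of $Z_1$. Consequently, after the final log-resolution $\mu_3 : Y_h \to Y_2$, every $\wtilde h$-exceptional prime divisor is automatically $\mu_3$-exceptional, hence $\mu$-exceptional. This is precisely what allows conditions~\ref{neat}.\ref{neat2} and~\ref{neat}.\ref{neat3} to be satisfied simultaneously.

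Your proposal has no analogue of this step, and the gap is not merely cosmetic. In general, $Y$ itself may contain a prime divisor $P$ with $\operatorname{codim}_Z h(P) \geq 2$ which does not lie in $\supp D$. Such a $P$ is $h$-exceptional but \emph{not} exceptional for any resolution of $Y$, because it is already present in $Y$. Your suggestion to ``simply include all such $P$ in $D_h$ with coefficient $1$'' then directly contradicts condition~\ref{neat}.\ref{neat2}, which demands $D_h = \wtilde D + E'$ with $E'$ a $\mu$-\emph{exceptional} divisor. This is the real tension — between~\ref{neat}.\ref{neat2} and~\ref{neat}.\ref{neat3} — not the feedback loop between~\ref{neat}.\ref{neat1} and~\ref{neat}.\ref{neat3} that you identify, and it cannot be dissolved by an ``alternating process that stabilises'': no amount of blowing up $Y$ over $Z_h$ removes a pre-existing $h$-exceptional divisor, and no amount of embedded resolution makes it $\mu$-exceptional. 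The only way to handle it is to modify the \emph{base} so that its image becomes divisorial, which is exactly what flattening achieves. Your closing remark that the Jabbusch--Kebekus argument ``goes through verbatim'' is correct, but only because their construction (and the paper's) already contains the flattening step you omitted.
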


\begin{proof}Let $\alpha_1:Z_1 \to Z$ be a suitable modification of the base of the fibration $h$ such that the normalization of the induced fiber product $Y\times_Z Z_1$, which we denote by $Y_1$, givers rise to an equidimensional fibration $h_1:Y_1 \to Z_1$, i.e. a \emph{flattening} of $h$, and a birational map $\mu_1:Y_1 \to Y$ (see the diagram below). Define $D_1$ to be the maximal reduced divisor contained in the $\supp(\mu_1^{-1}D)$ and let $$D_1=D_1^{\text{ver}}+D_1^{\text{hor}}$$ be the decomposition of $D_1$ into sum of its vertical $D_1^{\text{ver}}$ and horizontal $D_1^{\text{hor}}$ components. Introduce a closed subset in $Z_1$ by $D_{Z_1}:=h_1(D_1^{\text{ver}})$. Let $\Delta_1\subset Z_1$ denote the log-discriminant locus defined by the fibration $h_1$ and the divisor $D_1$. Now, let $\alpha_2:Z_h \to Z_1$ be a desingularization of $Z_1$ such that the maximal reduced divisor in the $\supp(\alpha_2^{-1}\Delta_1 \cup \alpha_2^{-1}D_{Z_1})$ is snc. Set $Y_2$ to be the normalization of the fiber product $Y_1\times_{Z_1}Z_2$, and $\mu_2$ the naturally induced birational morphism. Define $D_2$ in $Y_2$ by the maximal reduced divisor contained in the $\supp(\mu_2^{-1}D_1)$. Finally let $\mu_3:Y_h \to Y_2$ be a log-resolution of $(Y_2,D_2)$ and take $\wtilde h:Y_h \to Z_h$ to be the induced fibration.

$$
\xymatrix{
    Y   \ar[d]^h && Y_1 \ar[ll]_{\mu_1} \ar[d]^{h_1} &&  Y_2 \ar[ll]_{\mu_2}  \ar[d]^{h_2} && Y_h \ar[ll]_{\mu_3} \ar[dll]^{\wtilde h}     \\
   Z  &&  Z_1 \ar[ll]_{\alpha_1} && Z_h \ar[ll]_{\alpha_2}
 }    
$$

\noindent Now set $\wtilde D_2$ to be the maximal reduced divisor in $\supp(\mu_3^{-1})$. Note that $h_1$ remains equidimensional under the base change of $\alpha_2$, i.e. $h_2$ is also equidimensional. This implies that when we desingularize $Y_2$ by $\mu_3$, every $\wtilde h$-exceptional divisor is $\mu_3$-exceptional. Let $E_3$ be the sum of all $\wtilde h$-exceptional prime divisors in $Y_h$ and define $D_h:=\wtilde D_2+E_3$ to be the extension of $\wtilde D_2$ by $E_3$. We finish by defining the birational morphisms $\mu$ and $\alpha$ in Definition~\ref{neat} by $(\mu_3 \circ \mu_2 \circ \mu_1)$ and $(\alpha_2\circ \alpha_1)$, respectively. Now by construction, the $\mathcal{C}$-structure $\Delta_h$ on $Z_h$ induced by $D_h$ and $\wtilde h$ defines a smooth pair $(Z_h,\Delta_h)$, as required.

\end{proof}

\begin{thm}[Reduction of the Isotriviality Conjecture]\label{reduction} The isotriviality conjecture~\ref{iso} holds, if the following assertion is true: 

\begin{enumerate} 
\item\label{assert} Let $(T,B)$ be a smooth pair. If $\bigl(\Omega_T\log (B)\bigr)^{\otimes_{\mathcal{C}}N}$ admits an invertible subsheaf $\sL$ with $\kappa_{\mathcal{C}}(T,\sL)=\dim T$, then $(T,B)$ is of log-general type.

\end{enumerate}

\end{thm}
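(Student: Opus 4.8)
The goal is to show that the isotriviality conjecture~\ref{iso} follows from assertion~\ref{assert}. So suppose that~\ref{assert} holds, and let $Y^\circ$ be a smooth quasi-projective variety which is special and which parametrizes a smooth family $f^\circ:X^\circ\to Y^\circ$ of canonically-polarized manifolds. Fix a smooth compactification $Y$ of $Y^\circ$ with snc boundary $D=Y\setminus Y^\circ$; by Definition~\ref{special} the logarithmic pair $(Y,D)$ is special. We argue by contradiction: assume the family is \emph{not} isotrivial, equivalently $\Var(f^\circ)>0$. The plan is to produce, after passing to a suitable neat model, a rank-one subsheaf of a sheaf of symmetric $\mathcal{C}$-differential forms with maximal $\mathcal{C}$-Kodaira dimension on a pair that is \emph{still} special, and then invoke~\ref{assert} together with Corollary~\ref{invariance} to derive a contradiction.

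\emph{Step 1: the Viehweg--Zuo sheaf factors through the moduli.} By~\cite[Thm.~1.4]{VZ02} non-isotriviality gives a line subsheaf $\sA\subseteq\Sym^N\Omega^1_Y\log D$ with $\kappa(\sA)\geq\Var(f^\circ)$ for some $N>0$. Invoking the refinement~\cite[Thm.~1.4]{JK11a}, this subsheaf is pulled back from the base of the moduli map: more precisely, after replacing $(Y,D)$ by a birational model we obtain a fibration $h:Y\to Z$ onto the (image of the) moduli stack, with $Z$ projective, a natural $\mathcal{C}$-structure on $Z$ coming from the discriminant of $h$, and a rank-one subsheaf $\sC\subseteq\Sym_{\mathcal{C}}^N\Omega^1_Z\log\Delta$ with $\kappa_{\mathcal{C}}(Z,\sC)=\dim Z$ — this is precisely where the Jabbusch--Kebekus framework of $\mathcal{C}$-differential forms enters, since the coarse-moduli forms see the orbifold structure. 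Pulling back by $h$ along a neat model yields a line subsheaf of $\Sym^N\Omega^1_{Y_h}\log D_h$.

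\emph{Step 2: pass to a neat model and reduce to a pair of maximal $\mathcal{C}$-Kodaira dimension.} Apply Proposition~\ref{construction} to $(Y,D)$ and $h$ to obtain a neat model $(Y_h,D_h)$ with fibration $\wtilde h:Y_h\to Z_h$ and induced smooth pair $(Z_h,\Delta_h)$. By conditions~\ref{neat}.\ref{neat1} and~\ref{neat}.\ref{neat3} the morphism $\wtilde h^*$ pulls $\Sym_{\mathcal{C}}^N\Omega^1_{Z_h}\log\Delta_h$ into $\Sym^N\Omega^1_{Y_h}\log D_h$, so the subsheaf $\sC$ of Step~1 transports to a rank-one subsheaf $\sL\subseteq\Sym_{\mathcal{C}}^N\Omega^1_{Z_h}\log\Delta_h$ with $\kappa_{\mathcal{C}}(Z_h,\sL)=\dim Z_h$ (the $\mathcal{C}$-Kodaira dimension is a birational invariant of the orbifold pair). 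Now apply assertion~\ref{assert} to $(T,B)=(Z_h,\Delta_h)$: the pair $(Z_h,\Delta_h)$ is of log-general type, i.e. $K_{Z_h}+\Delta_h$ is big.

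\emph{Step 3: contradiction with specialness.} Since $(Y,D)$ is special, so is the neat model $(Y_h,D_h)$ by condition~\ref{neat}.\ref{neat2} (cf.~\cite[Cor.~9.5]{JK11b}), and Campana's theory gives that the orbifold base $(Z_h,\Delta_h)$ of a fibration on a special pair cannot be of log-general type unless $\dim Z_h=0$. Thus $\dim Z_h=0$, which forces $\Var(f^\circ)=0$ and the family to be isotrivial — contradicting our assumption. Alternatively, and more self-containedly, one can run this last step through Corollary~\ref{invariance}: bigness of $K_{Z_h}+\Delta_h$ together with generic semi-positivity forces a rank-one subsheaf of some $\Omega^p_{Z_h}\log\Delta_h$ of Kodaira dimension $p$, violating Definition~\ref{special} directly for the pair $(Y,D)$ after pulling back and saturating. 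Either way we conclude.

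\emph{The main obstacle.} The delicate point is Step~2: one must verify that the neat-model construction genuinely preserves both the maximality of the $\mathcal{C}$-Kodaira dimension of the Viehweg--Zuo subsheaf \emph{and} the specialness of the pair, simultaneously, and that the pull-back $\wtilde h^*$ of $\mathcal{C}$-differential forms is compatible with the $\mathcal{C}$-product operation of Definition~\ref{okodaira} so that $\kappa_{\mathcal{C}}$ does not drop. Controlling the interaction between the orbifold structure $\Delta_h$ induced by the discriminant of $\wtilde h$ and the boundary $D_h$ — in particular ensuring that no positivity is lost when restricting attention to $\mathcal{C}$-multiplicities — is where the careful bookkeeping of Sections~\ref{VZJK} and the results of~\cite{JK11b,JK11a} must be combined. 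Once that compatibility is in hand, the rest follows formally from~\ref{assert}, Corollary~\ref{invariance}, and the definition of specialness.
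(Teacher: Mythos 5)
Your proposal follows essentially the same route as the paper: invoke Viehweg--Zuo to obtain a positive line subsheaf, use the Jabbusch--Kebekus refinement to locate it in the saturated image of the differential of the moduli map, pass to a neat model to transfer it to a rank-one subsheaf of $\Sym_{\mathcal{C}}^N\Omega^1_{Z_h}\log\Delta_h$ with maximal $\mathcal{C}$-Kodaira dimension, feed this into assertion~\ref{assert}, and contradict specialness. The only notable presentational differences are that the paper separates out the case where the moduli map is already generically finite (disposing of it via Campana--Paun's proof of Viehweg's conjecture) and is more precise about which cited result produces the subsheaf on the orbifold base (namely \cite[Cor.~5.8]{JK11a} applied \emph{after} the neat model is in place, not \cite[Thm.~1.4]{JK11a} alone), but neither discrepancy is a gap in your argument.
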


\begin{proof} Let $f^\circ:X^{\circ} \to Y^{\circ}$ be a smooth family of canonically-polarized manifolds, where $Y^\circ$ is a special quasi-projective variety. We assume that $\dim(Y^\circ)>0$ (otherwise there is nothing to prove). Let $Y$ be a smooth compactification with boundary divisor $D$ such that $D \cong Y\backslash Y^{\circ}$ and that the induced map $\wtilde {\mu}: Y \to \overline{\mathfrak M}$ to a compactification of $\mathfrak{M}$ is a morphism. Aiming for a contradiction, assume that the family $f^\circ:X^\circ \to Y^\circ$ is \emph{not} isotrivial, that is $\dim(\mathrm{Im}(\wtilde \mu))>0$. Now, if $\wtilde {\mu}$ is generically finite, then thanks to Campana and P\u{a}un's solution to Viehweg's conjecture (Conjecture~\ref{VC}), we find that $(K_Y+D)$ is big, contradicting the assumption that $(Y,D)$ is special. Therefore to prove the theorem, we only need to treat the case where $\wtilde {\mu}:Y \to \overline{\mathfrak M}$ is \emph{not} generically finite (so that $\wtilde \mu$ has positive-dimensional general fibres). In this case, by the Stein factorization, we can find a projective variety $Z$ such that the morphism $\wtilde \mu$ factors through a fibration with connected fibres $h:Y \to Z$ and a finite morphism $Z \to \overline{\mathfrak M}$. According to Proposition~\ref{construction}, we can find a neat model $(Y_h,D_h)$ of the pair $(Y,D)$ and the fibration $h:Y\to Z$. 

$$
  \xymatrix{
    &&  Y  \ar[dll]_{\wtilde \mu}  \ar[d]_{h}  &Y_h  \ar[l]_{\mu} \ar[d]^{\wtilde h}  \\
     \overline{\mathfrak M}  && Z \ar[ll]_{\text{finite}}  & Z_h \ar[l]_{\alpha}
  }
  $$
We observe that since $Y_h\backslash D_h$ is isomorphic to an open subset of $Y^{\circ}$, it also parametrizes a smooth family of canonically polarized manifolds. Thus by~\cite[Thm.~1.4]{VZ02}, for some positive integer $N$, we can find a line subbundle $\sL\subseteq \bigl(\Omega_{Y_h}\log (D_h)\bigr)^{\otimes N}$ such that $\kappa(Y_h,\sL)\geq \dim Z_h$. Moreover by~\cite[Thm.~1.4]{JK11a}, we know that the Viehweg-Zuo subsheaf $\sL$ generically comes from the coarse moduli space. More precisely, there exists an inclusion $\sL\subseteq \sB^{\otimes N}$, where $\sB$ is the saturation of the image of 

$$d\wtilde h: (\wtilde h)^*(\Omega_{Z_h}) \to \Omega_{Y_h}\log (D_{h}).$$

Let us now collect the various properties of the pairs $(Y_h,D_h)$ and $(Z_h,\Delta_h)$, and the fibration $\wtilde h:Y_h\to Z_h$ (recall that, by definition, the divisor $\Delta_h$ is the $\mathcal{C}$-base of the fibration $\wtilde h:(Y_h,D_h)\to Z_h$), that we have found so far:

\begin{changemargin}{0.5cm}{0.5cm}
\refstepcounter{equation}\theequation. $(Y_h,D_h)$ and $(Z_h,\Delta_h)$ are both smooth pairs (property (\ref{neat}.\ref{neat1})).

\noindent  \refstepcounter{equation}\theequation. $D_{h}$ contains all $\wtilde h$-exceptional prime divisors (property (\ref{neat}.\ref{neat3})).

\noindent  \refstepcounter{equation}\theequation. There exists a saturated rank-one subsheaf $\sL \subseteq \sB^{\otimes N}$, for some positive integer $N$, such that $\kappa(Y_h,\sL)\geq \dim Z_h$.

\end{changemargin}
With these conditions, we can apply~\cite[Cor.~5.8]{JK11a} to find a saturated rank-one subsheaf $\sL_{Z_h}\subseteq \bigl(\Omega_{Z_h}\log (\Delta_h)\bigr)^{\otimes_{\mathcal{C}}N}$ such that 
\begin{equation}\label{sub}
     \kappa_{\mathcal{C}}(Z_h,\sL_{Z_h})= \kappa(Y_h,\sL)\geq \dim(Z_h).
        \end{equation}

Finally, if the statement (\ref{reduction}.\ref{assert}) holds, then $(Z_h,\Delta_h)$ is of log-general type. On the other hand by property (\ref{neat}.\ref{neat2}), for every $1\leq p \leq n$, we can push-forward invertible subsheaves of $\Omega^p_{Y_h}\log (D_{h})$ to those of $\Omega^p_Y\log (D)$. In particular, since $(Y,D)$ is special, then so is $(Y_h,D_{h})$. But, this is a contradiction to our previous finding that $(K_{Z_h}+\Delta_{h})$ is big (recall that for a neat model  $\wtilde h:Y_h\to Z_h$, and for sufficiently divisible positive integer $m$, we always have 
\begin{equation}
h^0(Y_h,\sG^{\otimes m})=h^0\bigl(Z_h,\sO_{Z_h}(K_{{Z_h}}+\Delta_h)^{\otimes m}\bigr),
\end{equation}
where $\sG$ denotes the saturation of the pull-back bundle $(\wtilde h)^* \bigl(\sO_{Z_h}(K_{Z_h})\bigr)$ inside $\Omega_{Y_h}^{\dim(Z_h)}\log (D_h)$).

\end{proof}

\

\section{The Isotriviality conjecture: The approach of Campana and P\u{a}un}\label{last}

In this section we prove the statement (\ref{reduction}.\ref{assert}) in the previous section. The isotriviality conjecture will then follow from Theorem~\ref{reduction}. The proof is completely based on the solution of~\cite[Sect.~4]{CP13} to the Viehweg's hyperbolicity conjecture~\ref{VC}. In particular, Theorem~\ref{final} should be taken as the generalization of~\cite[Thm.~4.1]{CP13} from the category of purely logarithmic smooth pairs (the boundary divisor is reduced) to that of smooth pairs in general.

For the ease of notation we have replaced the pair $(T,B)$ in the reduction statement (\ref{reduction}.\ref{assert}) by $(X,D)$ with the warning that $D$ should not be confused with the boundary divisor of the compactification of $Y^{\circ}$ that was introduced in the previous sections. 

\begin{prop}\label{cp} Let $(X,D)$ be a smooth pair of dimension $n$ and $\sL\subseteq \bigl(\Omega_X\log (D)\bigr)^{\otimes_{\mathcal{C}}N}$ a saturated rank one subsheaf with $\kappa_{\mathcal{C}}(X,\sL)=\dim X$. For every ample divisor $A$ in $X$, there exists a rational number $c=c(A,\sL)\in \bQ^+$, depending on $A$ and $\sL$, such that the inequality

\begin{equation}\label{volume}
   \vol(K_X+D+G)\geq c\cdot \vol (A),
     \end{equation}

\noindent holds for every $\mathbb{Q}$-Cartier divisor $G$ satisfying the following properties:

\begin{changemargin}{0.5cm}{0.5cm}
\refstepcounter{equation}\theequation.\label{prop1} $(D+G) \sim_{\mathbb{Q}} P$, for some big $\mathbb{Q}$-Cartier divisor $P$ such that $\lfloor P \rfloor =0$.

\noindent  \refstepcounter{equation}\theequation.\label{prop2} $(X,D+G)$ and $(X,P)$ are both smooth pairs.

\noindent  \refstepcounter{equation}\theequation. \label{prop3} The $\bQ$-Cartier divisor $(K_X+D+G)$ is pseudo-effective. 

\end{changemargin}

\end{prop}

\begin{proof} First, let us fix an ample divisor $A$. We notice that by an argument similar to that of Kodaira's lemma~\cite[Prop.~2.2.6]{Laz04}), we can always find a (sufficiently large) positive integer $m$ such that 
\begin{equation*}
H^0\bigl(X,(\sL)^{\otimes_{\mathcal{C}}N}\otimes \sO_X(-A)\bigr)\neq 0.
\end{equation*}
Let the invertible subsheaf $\sL'\subseteq \bigl(\Omega_X\log(D)\bigr)^{\otimes_{\mathcal{C}}(m.N)}$ denote the line-bundle $\sL^{\otimes_{\mathcal{C}}m}$, so that the inequality 
\begin{equation}\label{ineq:0}
A\leq L'
\end{equation}
holds between Cartier divisors $L'$ and $A$, $L'$ being the divisor verifying the isomorphism $\sO_X(L')\cong \sL'$. We shall prove the proposition in two steps. First, we run the log-minimal model program (or LMMP, for short) for the smooth pair $(X,P)$. We notice that since $P$ is big and has no reduced components (assumption (\ref{prop1})),  according to~\cite[Thm.~1.1]{BCHM10}, after a finite number of divisorial contractions and log-flips, the program terminates in a log-minimal model $(X',P')$, i.e. $(K_{X'}+P')$ is nef. Here, at the minimal level, we shall find a lower-bound for $\vol(K_{X'}+P')$ in terms of $\vol(A)$ and \emph{independent of $G$}. The second step of the proof is standard; we will just use the negativity lemma in the minimal model theory and replace $\vol(K_{X'}+P')$ by $\vol(K_X+P)$ to establish the required inequality (\ref{volume}).

\

\noindent \textbf{Step.~1: Log-minimal model of $(X,P)$ and the volume of its log-canonical divisor.} Let $\pi:(X,P)\dashrightarrow (X',P')$ be the birational map defined by the LMMP. Take $\mu:\wtilde X\to X$ to be a modification of $X$ resolving the indeterminacy of $\pi$, with resulting morphism $\wtilde \pi:\wtilde X \to X'$, and such that $\supp(\Exc(\mu)\cup \wtilde D\cup \wtilde G)$, where $\wtilde D$, $\wtilde G$ are the $\mu$-birational transforms of $D$ and $G$, respectively, is simple normal-crossing in $\wtilde X$:

$$
  \xymatrix{
 \wtilde Y \ar[rr]^{\gamma}_{\text{adapted cover}} && \wtilde X \ar[d]_{\mu}  \ar[drr]^(.4){\quad \wtilde \pi, \text{ birational}} \\
 && (X,P) \ar@{.>}[rr]^(.4){\pi}_(.4){LMMP} && (X',P')   
}
$$  

\noindent Let $\gamma:\wtilde Y\to \wtilde X$ be an adapted cover for the pair $(\wtilde X,\wtilde D+\wtilde G+E)$, where $E$ is the maximal reduced divisor contained in $\Exc(\mu)$. 
We notice that, as $\sL'$ is a subsheaf of $\bigl(\Omega_X\log(D)\bigr)^{\otimes_{\mathcal{C}}(m.N)}(\subseteq \bigl(\Omega_X\log(\ulcorner D\urcorner)\bigr)^{\otimes (m.N)})$, the inclusion  
\begin{equation*}
\mu^*(\sL')\subseteq \bigl(\Omega_{\wtilde X}\log(\wtilde D+\wtilde G+E)\bigr)^{\otimes_{\mathcal{C}}(m.N)}.
\end{equation*}
follows from the definition. Now, in order for us to use the generic semi-positivity result (Corollary.~\ref{invariance}), we need $(K_{\wtilde X}+\wtilde D+\wtilde G+E)$ to be pseudo-effective. This is indeed the case:  from the ramification formula for $\mu$ we have $(K_{\wtilde X}+\wtilde D+\wtilde G)=\mu^*(K_X+D+G)+\wtilde E$, $\wtilde E$ being an effective exceptional divisor (the effectivity follows from our assumption that $(X,D+G)$ is a smooth pair (\ref{prop1})). So, from the pseudo-effectivity of $(K_X+D+G)$ (assumption (\ref{prop3})) it follows that $(K_{\wtilde X}+\wtilde D+\wtilde G)$ is pseudo-effective, and thus so is $(K_{\wtilde X}+\wtilde D+\wtilde G+E)$, as required. Therefore Corollary~\ref{invariance} applies and the inequality 
\begin{equation*}
\mu^*(L')\cdot P^{n-1} \leq u(K_{\wtilde X}+\wtilde D+\wtilde G+E)\cdot P^{n-1}
\end{equation*}
holds, where $u:=(mN)(n^{mN})^{(mN-1)}$, for any nef divisor $P$ in $\wtilde X$. In particular, for any fixed ample divisor $H'$ in $X'$ and positive integer $r$, we have

\begin{IEEEeqnarray}{rCl}\label{ineq:1}
\mu^*(L')\cdot \wtilde \pi^*(K_{X'}+P'+\frac{1}{r}H')^{n-1} & \leq & u.(K_{\wtilde X}+\wtilde D+\wtilde G+E)\cdot \nonumber\\
&& \cdot \ \wtilde \pi^*(K_{X'}+P'+\frac{1}{r}H')^{n-1}.
\end{IEEEeqnarray}

Now, let $U$ be a Zariski open subset of $X'$ of $\codim_{X'}(X'\backslash U)\geq 2$ where $\pi^{-1}|_{U}$ and $\wtilde \pi^{-1}|_{U}$ are both isomorphisms. For every $r\in \bN^+$, define $d_r$ to be a sufficiently large positive integer such that the linear system $|d_r(K_{X'}+P'+\frac{1}{r}H')|$ is basepoint-free and that the irreducible curve $C_r:=B_r^1\cap \ldots \cap B_r^{n-1}$, cut out by general members $B_r^i\in |d_r(K_{X'}+P'+\frac{1}{r}H')|$, is a subset of $U$. We notice that as $C_r\subset U$, and because of our assumption (\ref{prop1}), the right-hand side of the inequality (\ref{ineq:1}) is equal to  $(\frac{1}{d_r})^{n-1}u.(K_{X'}+P')\cdot (K_{X'}+P'+\frac{1}{r})^{n-1}$. Therefore, we may write the inequality (\ref{ineq:1}) as 

\begin{equation*} (d_r)^{n-1}\mu^*(L')\cdot \wtilde \pi^*(K_{X'}+P'+\frac{1}{r}H')^{n-1}\leq u. (K_{X'}+P'+\frac{1}{r}H')^n,
\end{equation*}
so that 

\begin{equation}\label{ineq:2}
\mu^*(L')\cdot \wtilde \pi^*(K_{X'}+P'+\frac{1}{r}H')^{n-1}\leq u.\vol (K_{X'}+P'+\frac{1}{r}H').
\end{equation}

Next, we notice that, as $(L'-A)\geq 0$ (inequality (\ref{ineq:0})), the pull-back $\mu^*(L'-A)$ is also effective. Therefore, and again by using the fact that the nef cone in the N\'eron-Severi space $\mathrm{N}^1(\wtilde X)_{\bR}$ is equal to the closure of the ample one, we have $\mu^*(L'-A)\cdot \wtilde \pi^*(K_{X'}+P'+\frac{1}{r}H')^{n-1}\geq 0$. Hence we can rewrite the inequality (\ref{ineq:2}) as 

\begin{equation}\label{ineq:3}
\mu^*(A)\cdot \wtilde \pi^*(K_{X'}+P'+\frac{1}{r}H')^{n-1}\leq u.\vol (K_{X'}+P'+\frac{1}{r}H')
\end{equation}
Now, by applying the Teissier's inequality~\cite[Thm.~1.6.1]{Laz04} (to the left-hand side of the inequality (\ref{ineq:3})), we have

\begin{equation*}
\vol(A)^{\frac{1}{n}} \cdot \vol(K_{X'}+P'+\frac{1}{r}H')^{\frac{n-1}{n}}\leq u.\vol(K_{X'}+P'+\frac{1}{r}H'), 
\end{equation*}
i.e. 

\begin{equation}\label{ineq:4}
\vol(A)^{\frac{1}{n}}\leq u.\vol (K_{X'}+P'+\frac{1}{r}H')^{\frac{1}{n}}.
\end{equation}
Finally, thanks to the continuity of $\vol(.)$, by taking $r\to \infty$ in the inequality (\ref{ineq:4}) we have 
\begin{equation}\label{ineq:5}
\frac{1}{u^n} \cdot \vol (A)\leq \vol (K_{X'}+P'),
\end{equation}
that is, the inequality (\ref{volume}) holds for the log minimal model $(X',P')$, if we take $c:=1/u^n$.

\
    
\noindent \textbf{Step.~2: Lower-bound for the volume of $(K_X+P)$.} By the negativity lemma in the minimal model theory, we know that $H^0(X,m(K_X+P))\cong H^0(X',m(K_{X'}+P'))$, for all $m\in \mathbb{N}^+$.  In particular the equality $\vol(X,K_X+P)=\vol(X',K_{X'}+P')$ holds. The required inequality (\ref{volume}) now follows form the inequality (\ref{ineq:5}) in the previous step and assumption (\ref{prop1}).

\end{proof}

\

\begin{thm}\label{final} Let $(X,D)$ be a smooth pair and $\sL\subseteq \bigl(\Omega_X\log (D)\bigr)^{\otimes_{\mathcal{C}}N}$ an invertible subsheaf. If $\kappa_{\mathcal{C}}(X,\sL)=\dim X$  , then $(K_X+D)$ is big.

\end{thm}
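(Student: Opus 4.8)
The plan is to derive Theorem~\ref{final} from Proposition~\ref{cp} by a bootstrapping argument: Proposition~\ref{cp} gives a lower bound on $\mathrm{vol}(K_X+D+G)$ in terms of $\mathrm{vol}(A)$, uniform in $G$, provided $G$ satisfies properties \ref{cp}.\ref{prop1}--\ref{cp}.\ref{prop3}; the idea is to choose a sequence of such $G = G_\varepsilon$ tending to $0$ (in the sense that $D+G_\varepsilon \to D$) so that $\mathrm{vol}(K_X+D+G_\varepsilon) \to \mathrm{vol}(K_X+D)$ by continuity of the volume function, yielding $\mathrm{vol}(K_X+D) \geq c\cdot\mathrm{vol}(A) > 0$, i.e.\ $K_X+D$ is big. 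So the main task is the construction of a valid family $\{G_\varepsilon\}$.

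First I would observe that we may assume $K_X+D$ is pseudo-effective: if not, then (since $(X,D)$ is smooth and in particular klt) the $\mathcal{C}$-cotangent generic positivity machinery does not directly apply, but in fact one argues that $\kappa_{\mathcal C}(X,\sL)=\dim X$ already forces pseudo-effectivity of $K_X+D$ — indeed Corollary~\ref{invariance} (applied to nef classes, hence on a covering curve) shows $N(K_X+D)-L$ is pseudo-effective when $K_X+D$ is pseudo-effective, and conversely if $K_X+D$ were not pseudo-effective one runs a minimal model program on $(X,D)$ to a Mori fibre space and derives a contradiction with the existence of a big $\mathcal C$-subsheaf $\sL$ of maximal $\mathcal C$-Kodaira dimension restricting nontrivially to the general fibre. (This is the step I expect to need the most care; it parallels the dichotomy in the proof of Theorem~\ref{reduction}.) Once $K_X+D$ is pseudo-effective, property~\ref{cp}.\ref{prop3} is automatic for any effective $G$.

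Next I would construct $G_\varepsilon$. Take an ample divisor $A$ on $X$. Because $\kappa_{\mathcal C}(X,\sL)=\dim X$, Kodaira's lemma (as used at the start of the proof of Proposition~\ref{cp}) gives an effective divisor $F$ with $\sO_X(F)\cong \Sym^m_{\mathcal C}\sL$ for $m\gg 0$ and $A\leq F$ up to linear equivalence; rescaling we get $A\leq L$. For small rational $\varepsilon>0$, set $G_\varepsilon$ to be a general effective $\mathbb Q$-divisor $\mathbb Q$-linearly equivalent to $\varepsilon A$ (or to $\varepsilon A'$ for a suitable very ample $A'$) chosen so that $D+G_\varepsilon$ is snc, $\lfloor D+G_\varepsilon\rfloor = \lfloor D\rfloor$, and $P_\varepsilon := D+G_\varepsilon$ has $\lfloor P_\varepsilon\rfloor = 0$ — this requires first passing from $D$ to its fractional part, writing $D = \lceil D\rceil - \{D\}$ style; more precisely, if $D$ has reduced components one perturbs their coefficients down by $\varepsilon$ as well, absorbing the loss into $G_\varepsilon$, so that property~\ref{cp}.\ref{prop1} ($D+G_\varepsilon\sim_{\mathbb Q} P_\varepsilon$ big, $\lfloor P_\varepsilon\rfloor=0$) and property~\ref{cp}.\ref{prop2} ($(X,D+G_\varepsilon)$ and $(X,P_\varepsilon)$ smooth pairs) both hold for general such $G_\varepsilon$ by Bertini. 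Pseudo-effectivity of $K_X+D+G_\varepsilon$ (property~\ref{cp}.\ref{prop3}) follows from pseudo-effectivity of $K_X+D$ plus effectivity of $G_\varepsilon$.

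Finally, apply Proposition~\ref{cp}: there is a constant $c\in\mathbb Q^+$, independent of $\varepsilon$, with $\mathrm{vol}(K_X+D+G_\varepsilon)\geq c\cdot\mathrm{vol}(A)$ for every admissible $G_\varepsilon$. Letting $\varepsilon\to 0$, the divisors $K_X+D+G_\varepsilon$ converge (in $N^1(X)_{\mathbb R}$, or already as $\mathbb Q$-divisor classes with coefficients tending to those of $K_X+D$) to $K_X+D$, so by continuity of the volume function on the big cone — and lower semicontinuity/continuity across the boundary — we get $\mathrm{vol}(K_X+D)=\lim_{\varepsilon\to 0}\mathrm{vol}(K_X+D+G_\varepsilon)\geq c\cdot\mathrm{vol}(A)>0$. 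Hence $\mathrm{vol}(K_X+D)>0$, which for the $\mathbb Q$-divisor $K_X+D$ on the smooth projective variety $X$ means $K_X+D$ is big, completing the proof. The one subtlety to watch is ensuring the \emph{same} constant $c$ works for all $\varepsilon$ — this is exactly what Proposition~\ref{cp} was designed to provide, since its $c$ depends only on $\sL$ and not on $G$ — and making sure the continuity of $\mathrm{vol}$ is invoked correctly at the boundary of the big cone (where $\mathrm{vol}$ is continuous on all of $N^1(X)_{\mathbb R}$, so this is fine).
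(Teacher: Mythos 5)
Your high-level strategy matches the paper's: perturb $D$ by a small effective $\mathbb{Q}$-divisor $G$ so that Proposition~\ref{cp} applies, obtain a lower bound $\mathrm{vol}(K_X+D+G)\geq c\cdot\mathrm{vol}(A)$ with $c$ independent of $G$, and let $G\to 0$ using continuity of $\mathrm{vol}$. Your construction of $G_\varepsilon$ is also in the same spirit as the paper's $G=\frac{1}{M}B_D$ with $B_D\in|r(H-D)|$ (the paper simply makes your informal ``absorb the reduced coefficients into $G_\varepsilon$'' precise by writing $D+\frac{1}{M}B_D\sim_{\mathbb Q}(1-\frac{r}{M})D+\frac{r}{M}H=:P_M$, which visibly has $\lfloor P_M\rfloor=0$).

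The genuine gap is the verification of property~\ref{cp}.\ref{prop3}, i.e.\ pseudo-effectivity of $K_X+D+G$. You propose to establish pseudo-effectivity of $K_X+D$ itself, by a dichotomy that runs an MMP of $(X,D)$ to a Mori fibre space and ``derives a contradiction'' from the existence of $\sL$ with $\kappa_{\mathcal C}=\dim X$ --- but this is not worked out, and it is not parallel to anything actually proved in Theorem~\ref{reduction}. Moreover it is not needed: the paper never proves $K_X+D$ pseudo-effective as an intermediate step (that would essentially be the conclusion). Instead, the paper's Claim~\ref{claim} proves the weaker statement that $K_X+P_M$ is pseudo-effective for all $M>r$, via the \emph{effective log-threshold trick}: supposing $K_X+P_M$ is not pseudo-effective, choose a very ample $H'$ so that $\epsilon:=\min\{t\in\mathbb R^+: K_X+P_M+tH'\ \text{is psef}\}\in(0,1)$ is rational (by \cite[Cor.~1.1.7]{BCHM10}); then $G=\frac{1}{M}B_D+\epsilon H'$ \emph{does} satisfy all hypotheses of Proposition~\ref{cp}, so $K_X+P_M+\epsilon H'$ is big, hence lies in the interior of the pseudo-effective cone, hence $K_X+P_M+(\epsilon-\delta)H'$ is still pseudo-effective for small $\delta>0$ --- contradicting minimality of $\epsilon$. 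This self-referential application of Proposition~\ref{cp} to supply its own pseudo-effectivity hypothesis is the essential idea you are missing, and your sketched MMP/Mori-fibre-space alternative does not constitute a proof of it.
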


\begin{proof} Let $H$ be a very ample divisor such that $H-D$ is ample, and let $r$ be a (fixed) sufficiently large positive integer for which the divisor $\bigl(r(H-D)\bigr)$ is very ample. Define the  hyperplane section $B_{D}$ to be a general member of the linear system $|r(H-D)|$. From construction it follows that, for every integer $M>r$, the $\mathbb{Q}$-divisor $(D+\frac{1}{M}B_{D})$ is $\mathbb{Q}$-linearly equivalent to an snc divisor, which we denote by $P_M$, with no reduced components:

\begin{align*}
D+\frac{1}{M}B_{D} &\sim_{\mathbb{Q}}D+\frac{1}{M}\bigl(r(H-D)\bigr)\\
     &=(1-\frac{r}{M})D+\frac{r}{M}H=:P_M.\\
  \end{align*}

\begin{subclaim}\label{claim} The divisor $(K_X+P_M)$ is pseudo-effective, for all integers $M$ verifying the inequality $M>r$.

\end{subclaim}
\noindent Let us for the moment assume that the claim holds. Define the $\mathbb{Q}$-Cartier divisor $G$ in Proposition~\ref{cp} by $G:=\frac{1}{M}B_{D}$. As the conditions (\ref{prop1}), (\ref{prop2}) and (\ref{prop3}) in Proposition~\ref{cp} are all satisfied, it follows from the inequality (\ref{volume}) that for any fixed ample divisor $A$, there exists a constant $c$ such that 

\begin{equation}\label{eq8}
   \vol(K_X+D+\frac{1}{M}B_{D}) \geq c\cdot \vol(A),\ \   \forall M\in \mathbb{N} \text{\ such that \  } M>r.
       \end{equation}
Therefore, by taking $M\to \infty$, the continuity property of $\vol(.)$ and the fact that \emph{the constant $c$ in Proposition~\ref{cp} is independent of $M$}, it follows that the divisor $(K_X+D)$ is big.

It now remains to prove the claim~\ref{claim}.

\

\noindent \emph{Proof of claim~\ref{claim}}. Aiming to extract a contradiction, suppose that $(K_X+P_M)$ is not pseudo-effective for some positive integer $M>r$. Let $H'$ be a suitably-chosen very ample divisor such that the effective log-threshold given by 

$$\epsilon:=min \{ t\in \mathbb{R}^+: K_X+P_M+tH' \textrm{\;\ is pseudo-effective}\},$$ is smaller than $1$. According to~\cite[Cor. 1.1.7]{BCHM10} $\epsilon$ is rational. Now by applying Proposiotion~\ref{cp} to the pair $(X,D)$ with $G:=\frac{1}{M}B_{D}+\epsilon H'$, we find that $K_X+P_M+\epsilon H'$ is big. But as the big cone forms the interior of the cone of pseudo-effective $\mathbb{Q}$-Cartier classes, for sufficiently small $\delta$, $K_X+D_M+(\epsilon-\delta)H'$ is also pseudo-effective, contradicting the minimality assumption on $\epsilon$.

\end{proof}

The isotriviality conjecture (Conjecture~\ref{iso}) now follows from Theorem~\ref{final} together with  Theorem~\ref{reduction} in the previous section.

\

\providecommand{\bysame}{\leavevmode\hbox to3em{\hrulefill}\thinspace}
\providecommand{\MR}{\relax\ifhmode\unskip\space\fi MR }
\providecommand{\MRhref}[2]{%
  \href{http://www.ams.org/mathscinet-getitem?mr=#1}{#2}
}
\providecommand{\href}[2]{#2}


\begin{thebibliography}{BCHM10}


\bibitem[BCHM10]{BCHM10}
Caucher Birkar, Paolo Cascini, Christopher D.Hacon, and James McKernan, \emph{Existence of minimal models for varieties of log general type}, Journal of the AMS 23 (2010), 405-468.


\bibitem[BDPP04]{BDPP}
S{\'e}bastien Boucksom, Jean-Pierre Demailly, Mihai P\u{a}un, and Thomas Peternell, \emph{The pseudo-effective cone of a compact {K}\"ahler manifold and varieties of negative Kodaira dimension}, J. Algebraic. Geom. \textbf{22} (2013), 201-248. doi:10.1090/S1056-3911-2012-00574-8.	
     
     
  
\bibitem[Cam04]{Ca04} 
 Fr{\'e}d{\'e}ric Campana, \emph{Orbifolds, special varieties and classification theory, Ann. Inst. Fourier (Grenoble)}, Vol54, no. 3, 499-630 (2004)


\bibitem[Cam08]{Ca08}
 Fr{\'e}d{\'e}ric Campana, \emph{Orbifoldes sp\'ciales et classification bim\'eromorphe des vari\'et\'es {K}\"ahl\'eriennes compactes}, arXiv:0705.0737v5 (2008).



\bibitem[CP13]{CP13}
Fr{\'e}d{\'e}ric Campana, and Mihai P\u{a}un, \emph{Orbifold generic semi-positivity: an application to families of canonically polarized families}, version~1, arXiv:1303.3169.



\bibitem[Iit82]{Iit82} Shigeru iitaka, \emph{Algebraic geometry}, Springer-Verlag, New York, 1977, Graduate texts in Mathematics, No. 52. 57. Number 3116.


\bibitem[JK11a]{JK11a}
Kelly Jabbusch, and Stefan Kebekus, \emph{Positive sheaves of differentials coming from coarse moduli spaces}, Annales de l'institut Fourier, 61 no. 6 (2011), p. 2277-2290.


\bibitem[JK11b]{JK11b}
\bysame, \emph{Families over special base manifolds and a conjecture of Campana}, Mathematische Zeitschrift: Volume 269, Issue 3 (2011), Page 847-878.



\bibitem[KK08]{KK08}
Stefan Kebekus, and S\'andor Kov\'acs, \emph{The structure of surfaces and threefolds mapping to the moduli stack of canonically polarized varieties}, Duke Math. J. vol.~155, Number 1, pp.~1-33, 2010.


\bibitem[Laz04]{Laz04}
Robert Lazarsfeld, \emph{Positivity in Algebraic Geometry I} , Springer, Vol. 48.


\bibitem[Lu02]{Lu02}
Steven S. Y. Lu, \emph{A refined Kodaira dimension and its canonical fibration}, arXiv:math/0211029.


\bibitem[Miy87a]{Miy87}
Yoichi Miyaoka, \emph{The {C}hern classes and {K}odaira dimension of a minimal
  variety}, Algebraic geometry, Sendai, 1985, Adv. Stud. Pure Math., vol.~10,
  North-Holland, Amsterdam, 1987, pp.~449--476. \MR{89k:14022}


\bibitem[Miy87b]{Miy85}
\bysame, \emph{Deformations of a morphism along a foliation and applications},
  Algebraic geometry, Bowdoin, 1985 (Brunswick, Maine, 1985), Proc. Sympos.
  Pure Math., vol.~46, Amer. Math. Soc., Providence, RI, 1987, pp.~245--268.
  \MR{MR927960 (89e:14011)}


\bibitem[Vie83]{Vie83}
Eckart Viehweg, \emph{Weal positivity and the additivity of the Kodaira dimension for certain fibre spaces}, Algebraic varieties and analytic varieties (Tokyo, 1981), Adv. Stud. Pure Math., Vol. 1, North-Holland, Amsterdam, 1983, pp. 329-353.


\bibitem[Vie95]{Vie95}
Eckart Viehweg, \emph{Quasi-projective moduli for polarized manifolds}, Ergebnisse der Mathematik und ihrer Grenzgebiete (3) [Results in Mathematics and Related Areas (3)], vol.~30, Springer-Verlag, Berlin, 1995.


\bibitem[VZ02]{VZ02}
Eckart Viehweg and Kang Zuo, \emph{Base spaces of non-isotrivial families of smooth minimal models}, Complex geometry (G\"ottingen, 2000), Springer, Berlin, 2002, pp. 279-328.



\bibitem[VZ03]{VZ03}
Eckart Viehweg, \emph{On the Brody hyperbolicity of moduli spaces for canonically polarized manifolds}, Duke Math. J.~118 (2003), no.~1, 103-150.



\end{thebibliography}
\end{document}